\newtheorem{thm}{Theorem}[section]
\newtheorem{lem}[thm]{Lemma}
\newtheorem{prop}[thm]{Proposition}
\newtheorem{cor}[thm]{Corollary}
\newtheorem{rem}[thm]{Remark}
\newtheorem{rem-eg}[thm]{Remark and Example}
\newtheorem{prob}[thm]{Problem}
\newenvironment{prf}{{\noindent \textbf{Proof:} }}{\hfill $\Box$\medskip}
\def\sideremark#1{\ifvmode\leavevmode\fi\vadjust{\vbox
to0pt{\vss \hbox to 0pt{\hskip\hsize\hskip1em
\vbox{\hsize2cm\tiny\raggedright\pretolerance10000
\noindent#1\hfill}\hss}\vbox to8pt{\vfil}\vss}}}
\begin{document}

\title{Phase-isometries on real normed spaces }

\author[Dongni Tan]{Dongni Tan}
\author[Xujian Huang]{Xujian Huang $^{*}$}
\address[Dongni Tan]{A Department of Mathematics, Tianjin University of Technology, Tianjin 300384, P.R. China}
\email{tandongni0608@sina.cn}
\address[Xujian Huang]{A Department of Mathematics, Tianjin University of Technology, Tianjin 300384, P.R. China}
\email{huangxujian86@sina.cn}

\keywords{phase-isometry; linear isometry; Mazur-Ulam theorem;  Wigner's theorem}

\subjclass[2010]{primary 46B03, secondly 46B04}

\thanks{
$*$Corresponding author.\\
\indent The authors are supported by the Natural Science Foundation of China (Grant Nos. 11371201.11201337, 11201338, 11301384).}

\maketitle

\begin{abstract}
We say that a mapping $f: X \rightarrow Y$ between two real normed spaces is a phase-isometry if it satisfies
the functional equation
\begin{eqnarray*} \{\|f(x)+f(y)\|, \|f(x)-f(y)\|\}=\{\|x+y\|, \|x-y\|\} \quad  (x,y\in X).\end{eqnarray*}
A generalized Mazur-Ulam question is whether every surjective phase-isometry is a multiplication of a linear isometry and a map with range $\{-1, 1\}$. This assertion is also an extension of a fundamental statement in the mathematical description of quantum mechanics, Wigner's theorem to real normed spaces. In this paper, we show that for every space $Y$ the problem is solved in positive way if $X$ is a smooth normed space, an $\mathcal{L}^{\infty}(\Gamma)$-type space or an $\ell^1(\Gamma)$-space with $\Gamma$ being an index set.
\end{abstract}
\section{Introduction}

\medskip

The study of isometries between normed spaces dates back to 1930s. The classical Mazur-Ulam theorem (\cite{MU}, 1932) states that an isometry
$f$ of a real normed space onto another normed space with $f(0)=0$ is linear. In general, the Mazur-Ulam Theorem fails without surjectivity assumption. For example, consider the mapping $f:\mathbb{R}\rightarrow \ell^2_{\infty}$ defined by $f(t)=(t, sint)$, which shows that an into isometry $f$ with $f(0)= 0$ is not necessarily linear. In 1967, Figiel \cite{Fi} proved the following non-surjective substitute for the Mazur-Ulam theorem: for every isometry $f$ of a real normed space $X$ into another normed space $Y$ with $f(0)=0$, there is a linear operator $T$ of norm one from $\overline{\mbox{span}}f(X)$ onto $X$ such that $T\circ f$ is the identity on $X$. Recently, Cheng et al. \cite{CDZ} gave a quantitative extension of Figiel's theorem: for every $\varepsilon$-isometry ($\varepsilon\geq 0$) of a real normed space $X$ into another normed space $Y$ with $f(0)=0$ and every $x^*\in X^*$, there exists $\varphi\in Y^*$ with $\|\varphi\|=\|x^*\|$ such that
\begin{align*}
|\varphi(f(x))- x^*(x)|\leq 4\varepsilon\|x^*\|\quad (x\in X).
\end{align*}
Figiel's theorem and its generalizations play an important role in the study of isometric embedding and $\varepsilon$-isometric approximative problems (see, for instance \cite{CDZ, CDD, DL, GK, OS, Q, SV}). These results show some deep relationship between isometries, $\varepsilon$-isometries  and linear isometries.

The famous Wigner's theorem is related to linear isometries, which plays a fundamental role in quantum mechanics and has deep connections with the mathematical theory of projective spaces. There are several equivalent formulations of Wigner's theorem, see \cite{B, BMS, Ge, G, M, SA, T} to list just some of them. One of them (see \cite{G} or \cite{SA}) characterizes mappings on a Hilbert space preserving the absolute value of the inner product of any pair of vectors. That is, assuming that $H$ and $K$ are complex Hilbert spaces and $f: H \rightarrow K $ is a mapping satisfying the functional equation
\begin{eqnarray}\label{abs}
|<f(x), f(y)>|=|<x, y>|  \quad (x,y\in H).
\end{eqnarray} Then there exists a phase function $\varepsilon: H \rightarrow \mathbb{C}$ with $|\varepsilon(x)|=1$ such that $\varepsilon f$ is a linear or conjugate linear isometry. In the real setting, Wigner's theorem says that every mapping $f$ satisfying the equation (\ref{abs}) is phase equivalent to a linear isometry (i.e., there exists a phase function $\varepsilon: H \rightarrow \{-1, 1\}$ such that $\varepsilon f$ is a linear isometry).  It is worth mentioning that L.Monl$\acute{a}$r \cite{M} described the form of all bijective mappings on the set of all rank-one idempotents of a Banach space which preserve zero products in both directions. This generalizes Uhlhorn's version of Wigner's theorem to indefinite inner product spaces (see the introduction of \cite{BMS}). From a mathematical point of view, it has raised a need to study Wigner's theorem in  more general setting of Banach spaces. It is worth studying Wigner's theorem from various points of view and a new general version of Wigner's theorem may certainly improve our understanding of it.

Let $X$ and $Y$ be real normed spaces. A mapping $f: X\rightarrow Y$ is called a \emph{phase-isometry} if it satisfies the functional equation \begin{eqnarray} \label{phase}
\{\|f(x)+f(y)\|, \|f(x)-f(y)\|\}=\{\|x+y\|, \|x-y\|\} \quad ( x,y\in X).
\end{eqnarray}
Let us say that a mapping $f: X \rightarrow Y$ is \emph{phase equivalent to a linear isometry} if there exists a phase function $\varepsilon: X \rightarrow \{-1, 1\}$ such that $\varepsilon f$ is a linear isometry. We can easily see that every mapping that is phase equivalent to a linear isometry is definitely a phase-isometry. It is interesting in seeing whether the converse also holds for real normed spaces. Motivated by the Mazur-Ulam Theorem, the following natural problem arises.
\begin{prob}\label{wmu}
Let $X$ and $Y$ be real normed spaces, and let $f: X \rightarrow Y$ be a surjective phase-isometry. Is it true that $f$ is phase equivalent to a linear isometry?
\end{prob}

By Wigner's theorem, this problem is solved in positive way without onto assumption for real Hilbert spaces. Therefore, we can regard Problem \ref{wmu} as a generalized Wigner's theorem for real normed spaces. Huang and Tan studied surjective phase-isometries between the same typical classical normed spaces such as $\ell^p(\Gamma)$ spaces with $(0<p<\infty)$ \cite{HT} and $\mathcal{L}^{\infty}(\Gamma)$-type spaces \cite{JT}, and they got positive answers for Problem \ref{wmu}. It should be noted that $l^p(\Gamma)$ spaces for all $0<p<1$  are not normed spaces. The proofs for such special normed spaces were obtained by using the specific form of norms and a lot of special techniques. The Problem \ref{wmu} for general real normed spaces is easy to understand, but it is not at all evident. Since continuity is implied by isometries, the proof of Mazur-Ulam theorem consists of showing that the surjective mapping preserves the midpoint of every segment. However, a phase-isometry is not necessarily continuous. The main feature of our approach is that instead of using the original proof of Mazur-Ulam Theorem, we shall rely on a modification of the quantitative extension of Figiel's theorem \cite{CDZ}. We will also need the fundamental theorem of projective geometry.

The aim of this paper is to contribute to the study of giving a general Wigner-type theorem on real normed spaces. We will apply the fundamental theorem of projective geometry to present a sufficient condition such that the Problem \ref{wmu} can be solved in the positive for general normed spaces. We give a new quantitative extension of Figiel's theorem in corresponding to phase-isometries, and show that for every phase-isometry $f$ of a real normed space $X$ into another normed space $Y$ and every $w^*$-exposed point $x^*$ of $B_{X^*}$, there exists a linear functional $\varphi \in Y^*$ of norm one such that $x^*(x)=\pm \varphi(f(x))$ for all $x\in X$. Making use of this result, we prove that if $X$ is smooth then every surjective phase-isometry $f: X \rightarrow Y$ is phase equivalent to a linear isometry. As a particular case, Problem \ref{wmu} is solved positively provided $X$ is an $\mathcal{L}^p(\mu)$ space with $1<p<\infty$ for an arbitrary measure $\mu$. With the properties of Birkhoff orthogonality, we still present another sufficient condition such that Problem \ref{wmu} can be solved in the positive for some special normed spaces. This will help us to deal with some sequence spaces. As a result, we prove that if $X$ is an $\mathcal{L}^{\infty}(\Gamma)$-type space or an $\ell^1(\Gamma)$-space for some index set $\Gamma$ then every surjective phase-isometry $f: X \rightarrow Y$ is phase equivalent to a linear isometry. These results extend the famous Wigner's theorem to real normed spaces, and can also be considered as a generalization of Mazur-Ulam type results. Although we feel this result is interesting in its own right, we hope that it will serve as a stepping stone to showing that every phase-isometry from a real normed space onto another normed space is phase equivalent to a linear isometry.

\section{Main lemma and Phase-isometries for smooth normed spaces}

In this note, the letters $X, Y$ are used to denote real normed spaces, $X^*, Y^*$ are their dual spaces. For a real normed space $X$ we denote by $S_X$ and $B_X$ the unit sphere and the closed unit ball of $X$, respectively.

We start this section with an interesting result showing that every surjective phase-isometry is an injective odd norm-preserving map.

\begin{lem}\label{inj}
Let $X$ and $Y$ be real normed spaces, and let $f: X \rightarrow Y$ be a phase-isometry. Then $f$ is a norm-preserving map and $f(-x)=\pm f(x)$ for every $x\in X$. Moreover, if $f$ is surjective then it is injective and $f(-x)=-f(x)$ for all $x\in X$.
\end{lem}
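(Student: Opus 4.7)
The plan is to extract all the information by carefully chosen substitutions into the defining equation \eqref{phase}, then use surjectivity for the last part.

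First I would set $x=y=0$ in \eqref{phase} to obtain $\{2\|f(0)\|,0\}=\{0\}$, which forces $f(0)=0$. Then setting $y=0$ gives $\{\|f(x)\|,\|f(x)\|\}=\{\|x\|,\|x\|\}$, so $\|f(x)\|=\|x\|$, proving that $f$ is norm-preserving. To handle $f(-x)$ I would substitute $y=-x$ to get
\begin{equation*}
\{\|f(x)+f(-x)\|,\,\|f(x)-f(-x)\|\}=\{0,\,2\|x\|\}.
\end{equation*}
Exactly one of the two norms on the left is zero, so either $f(-x)=-f(x)$ or $f(-x)=f(x)$, i.e.\ $f(-x)=\pm f(x)$.

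For the surjective case, the key step is to rule out the possibility $f(-x_0)=f(x_0)$ for any $x_0\neq 0$. Suppose such an $x_0$ existed; then $\|f(x_0)\|=\|x_0\|>0$, so $-f(x_0)\neq f(x_0)$. By surjectivity there exists $u\in X$ with $f(u)=-f(x_0)$. Plugging the pair $(x_0,u)$ into \eqref{phase} yields
\begin{equation*}
\{\|x_0+u\|,\,\|x_0-u\|\}=\{\|f(x_0)+f(u)\|,\,\|f(x_0)-f(u)\|\}=\{0,\,2\|x_0\|\},
\end{equation*}
which forces $u=\pm x_0$. But then $f(u)=\pm f(x_0)=f(x_0)$ (using our assumption $f(-x_0)=f(x_0)$), contradicting $f(u)=-f(x_0)\neq f(x_0)$. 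Hence $f(-x)=-f(x)$ for every $x\in X$.

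Finally, for injectivity, suppose $f(x_1)=f(x_2)$. The same substitution trick applied to the pair $(x_1,x_2)$ gives $\{\|x_1+x_2\|,\|x_1-x_2\|\}=\{0,2\|f(x_1)\|\}$, so $x_2=\pm x_1$. If $x_2=-x_1$ with $x_1\neq 0$, then $f(x_1)=f(-x_1)=-f(x_1)$, forcing $f(x_1)=0$ and hence $x_1=0$, a contradiction. Therefore $x_1=x_2$. There is no real obstacle here; the only ``clever'' ingredient is the use of surjectivity to produce a preimage of $-f(x_0)$ in the odd-symmetry step, and everything else is just specialization of \eqref{phase} to convenient pairs $(x,y)$.
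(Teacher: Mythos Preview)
Your proof is correct and follows essentially the same approach as the paper's: both arguments obtain norm-preservation and $f(-x)=\pm f(x)$ by specializing \eqref{phase}, and both use surjectivity to pick a preimage of $-f(x)$ in order to rule out the case $f(-x)=f(x)$. The only cosmetic differences are that the paper gets $\|f(x)\|=\|x\|$ from the substitution $y=x$ (rather than first proving $f(0)=0$ and then using $y=0$), and it establishes injectivity and oddness in a single pass, whereas you prove oddness first and then deduce injectivity from it; the underlying ideas are the same. (One tiny wording point: your phrase ``exactly one of the two norms on the left is zero'' is literally false when $x=0$, but that case is trivial.)
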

\begin{prf}
With the substitution $y=x$ in the equation (\ref{phase}), it follows that $f$ is norm preserving. Putting $y=-x$ in the equation (\ref{phase}), this yields \begin{eqnarray*}\{\|f(x)+f(-x)\|, \|f(x)-f(-x)\|\}=\{2\|x\|, 0\},\end{eqnarray*}
which implies that $f(-x)=\pm f(x)$. To prove the second conclusion, suppose that $f$ is surjective and $f(x)=f(y)$ for some $x, y \in X$. Using the norm preserving property, we have $f(x)=0$ if and only if $x=0$. Assume that $f(x)=f(y)\neq 0$. Choose $z\in X$ such that $f(z)=-f(x)$. Using the equation (\ref{phase}) for $x, y, z$, we obtain that
\begin{eqnarray*}&\{\|x+y\|, \|x-y\|\}=\{\|f(x)+f(y)\|, \|f(x)-f(y)\|\}=\{2\|x\|, 0\}\\
&\{\|x+z\|, \|x-z\|\}=\{\|f(x)+f(z)\|, \|f(x)-f(z)\|\}=\{2\|x\|, 0\}.\end{eqnarray*} This yields $y, z\in \{x, -x\}$. If $z=x$, then $f(x)=-f(x)=0$, which is a contradiction. So we obtain $z=-x$, and we must have $y=x$. For otherwise, we get $y=-x=z$ and \begin{eqnarray*}f(x)=f(y)=f(z)=-f(x).\end{eqnarray*} This leads to the contradiction that $f(x)\neq 0$.
\end{prf}

By the above result, a natural question may be raised whether the surjective phase-isometry is an isometry. An easy example where $f(x_0)=-x_0$, $f(-x_0)=x_0$ and $f(x)=x$ if $x\neq \pm x_0$ for some nonzero vector $x_0\in X$  indicates that it is not the case.

We will give an affirmative answer to the Problem \ref{wmu} when the domain $X$ is one dimensional.

\begin{prop}\label{real}
Let $X$ and $Y$ be two real normed spaces with $X$ being one dimensional, and let $f: X \rightarrow Y$ be a surjective phase-isometry. Then $f$ is phase-equivalent to a linear isometry.
\end{prop}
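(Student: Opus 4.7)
The plan is to reduce to the case where the codomain $Y$ is also one-dimensional, after which I can just write down the phase function by comparing signs. First I would invoke Lemma \ref{inj} to get that $f$ is norm-preserving, injective, and odd, i.e., $f(-x)=-f(x)$ for all $x\in X$.

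Fix a unit vector $e\in X$ so that $X=\mathbb{R}e$. For every $r>0$ the sphere of radius $r$ in $X$ is the two-point set $\{re,-re\}$. Surjectivity and norm-preservation therefore force
\[
\{y\in Y:\|y\|=r\}\subseteq\{f(re),f(-re)\}=\{\pm f(re)\},
\]
so every sphere in $Y$ has at most two points. If $\dim Y\geq 2$, then $Y\setminus\{0\}$ is path-connected and retracts onto the unit sphere via $y\mapsto y/\|y\|$, so each sphere is a connected subset of the Hausdorff space $Y$; such a set must be either a singleton or infinite, whereas here it contains the antipodal pair $\{y,-y\}$ of each of its elements, a contradiction. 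Hence $\dim Y\leq 1$, and since $f(e)\neq 0$ we actually have $\dim Y=1$.

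Once $\dim Y=1$, pick a unit vector $v\in Y$ so that $Y=\mathbb{R}v$. Norm-preservation then yields $f(te)=\delta(t)\,tv$ for some $\delta(t)\in\{-1,1\}$ for every $t\in\mathbb{R}$ (taking $\delta(0)=1$). Defining the linear isometry $T\colon X\to Y$ by $T(te)=tv$ and the phase function $\varepsilon\colon X\to\{-1,1\}$ by $\varepsilon(te)=\delta(t)$ gives $\varepsilon f=T$, which is exactly the desired conclusion.

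I do not anticipate any serious obstacle. The only slightly subtle point is the dimension bound on $Y$, which rests on the standard topological fact that a normed space whose unit sphere has only two points must be one-dimensional; beyond that, everything reduces to bookkeeping on signs.
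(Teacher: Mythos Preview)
Your argument is correct. Both proofs ultimately rest on the same observation: surjectivity together with norm-preservation forces every vector $tf(e)$ to lie in the image, and its preimage must be $\pm te$ since $X$ is one-dimensional. The paper uses this directly to conclude $f(te)=\pm tf(e)$ and then defines the phase function $\varepsilon$ exactly as you do, without ever discussing $\dim Y$ or invoking topology. You instead route the same observation through a dimension count on $Y$: the two-point bound on each sphere, combined with connectedness of the unit sphere in dimension $\geq 2$, forces $\dim Y=1$, after which the sign bookkeeping is immediate. Your detour is sound but unnecessary; the paper's version is shorter and avoids the (mild) topological input about connectedness of spheres. Conversely, your version makes the conclusion $\dim Y=1$ explicit, which is a pleasant structural byproduct that the paper's proof yields only implicitly.
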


\begin{prf}
Let $x$ be a norm-one vector in $X$. Note that $f$ is surjective norm-preserving map and that $X$ is one dimensional. For every $t\in \mathbb{R}$, if $f(y)=tf(x)$ for some $y\in X$ then $y=\pm tx$. Define a mapping $\varepsilon: X \rightarrow \{-1, 1\}$ as following: $\varepsilon(0)=1$, $\varepsilon(tx)$ is equal to $1$ or $-1$ according as $f(tx)=tf(x)$ or $f(tx)=-tf(x)$ for every $0\neq t\in \mathbb{R}$. Then the mapping $g:=\varepsilon f$ is a linear isometry as desired.
\end{prf}

We shall apply the fundamental theorem of projective geometry. In \cite{F}, a version of this theorem is stated for vector spaces over division rings. For the convenience of the readers, we write here a slight modification of the fundamental theorem for real vector spaces, which comes from \cite[Theorem 3]{Ge1}. Let $X$ be a real vector space. For every $M\subset X$, $[M]$ will denote the subspace generated by the set $M$. In particular, for every $0\neq x\in X$, $[x]$ will denote the line $\mathbb{R}\cdot x$.

\begin{thm}\label{pro} Let $X$ and $Y$ be two real vector spaces with at least three dimensions, and let $f: X\rightarrow Y$ be a mapping  which satisfies the following conditions:\\
\emph{(a)} The range of $f$ is not contained in a two-dimensional subspace of $Y$;\\
\emph{(b)} $z\in[x, y]$ implies that $f(z)\in[f(x), f(y)].$\\
Then there exists an injective linear map $A:X\rightarrow Y$ such that
\begin{equation*}
  [f(x)]=[A(x)], \quad \forall x\in X.
\end{equation*}
\end{thm}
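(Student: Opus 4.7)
The plan is to follow the classical proof of the fundamental theorem of projective geometry, specialized to the real field. First I would use hypothesis (b) to check that $f$ is projectively well-defined: if $y\in[x]$, so $y$ is a scalar multiple of $x$, then applying (b) with $z=y$ gives $f(y)\in[f(x)]$. Combined with (a), this lets me select three linearly independent vectors $e_1,e_2,e_3\in X$ whose images $f(e_1),f(e_2),f(e_3)$ are also linearly independent in $Y$: were every such triple dependent, the whole range would collapse into a two-dimensional subspace, contradicting (a).

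Next I would set $e_0=e_1+e_2+e_3$ and normalize. By iterated application of (b), $f(e_0)\in[f(e_1),f(e_2),f(e_3)]$, so $f(e_0)=\alpha_1 f(e_1)+\alpha_2 f(e_2)+\alpha_3 f(e_3)$ with each $\alpha_i\neq 0$. Replace $f(e_i)$ by $\alpha_i f(e_i)$, define $A(e_i)$ to be these rescaled vectors, and extend linearly on the span $V=[e_1,e_2,e_3]$. To verify $[f(x)]=[A(x)]$ for every $x\in V$, I would realize $x$ as a projective combination of $e_1,e_2,e_3,e_0$ via incidence of lines in the projective plane, and transport each such incidence from $X$ to $Y$ via (b). This produces a scalar map $\sigma:\mathbb{R}\to\mathbb{R}$ with $f(c\,e_i)\in[\sigma(c)A(e_i)]$, while the projective construction of the sum and product of two points on a line relative to the frame $\{e_0,e_1,e_2,e_3\}$ forces $\sigma$ to preserve addition and multiplication, hence to be a field endomorphism of $\mathbb{R}$.

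The main obstacle is ruling out nontrivial semilinearity. Over an arbitrary field one only obtains that $A$ is $\sigma$-semilinear for some field automorphism $\sigma$; the payoff in the real case is that $\mathrm{id}$ is the only field endomorphism of $\mathbb{R}$, since $\sigma$ preserves squares hence order, and an order-preserving endomorphism agreeing with $\mathrm{id}$ on $\mathbb{Q}$ must be $\mathrm{id}$ by density. Once $A$ is genuinely linear on $V$, I would extend to the rest of $X$ by adjoining basis vectors one at a time: given $x\in X\setminus V$, enlarge to a four-dimensional subspace $V'$ containing $V$ and $x$, repeat the frame argument inside $V'$ using the already-fixed normalization on $V$, and check that the two definitions agree on $V\cap V'=V$, which is forced by the uniqueness of the semilinear extension determined by a projective frame. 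Injectivity of $A$ comes for free: on each finite-dimensional restriction the images of the chosen basis are linearly independent by construction, and compatibility on overlaps glues these injectivities into a global one.
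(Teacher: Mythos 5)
You are not re-deriving the paper's argument here, because the paper has none: Theorem~\ref{pro} is quoted from \cite[Theorem 3]{Ge1} and ultimately rests on Faure's elementary proof of the fundamental theorem of projective geometry \cite{F}. What you propose is therefore a re-proof of that theorem, and on the initial three-dimensional subspace $V=[e_1,e_2,e_3]$ your outline is the standard frame/coordinatization route and is sound in spirit: (b) gives $f([x])\subseteq[f(x)]$, (a) together with (b) produces three independent vectors with independent images (argue it the other way around: by (a) pick $x,y,z$ with independent images, then (b) forces $x,y,z$ to be independent), the von Staudt incidence constructions produce a field endomorphism $\sigma$ of $\mathbb{R}$, and indeed $\mathrm{id}$ is the only such endomorphism.

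The genuine gap is in the passage beyond $V$ and in the claim that injectivity ``comes for free.'' Your extension step presupposes that inside each four-dimensional $V'$ you can choose a basis whose $f$-images are linearly independent; that is essentially the conclusion you are trying to prove, not something you may assume, and it cannot be extracted from (a) and (b) alone: the linear projection $f:\mathbb{R}^4\to\mathbb{R}^3$, $f(x_1,x_2,x_3,x_4)=(x_2,x_3,x_4)$, satisfies (a) and (b), yet no injective linear $A:\mathbb{R}^4\to\mathbb{R}^3$ exists, so as literally stated the theorem has no proof and your extension step is exactly where any attempt must break. What is missing is the nondegeneracy hypothesis $f(x)\neq 0$ for $x\neq 0$ (harmless in this paper's application, since a phase-isometry is norm-preserving; and automatic on the initial $V$, where a zero of $f$ would force $f(V)$ into a two-dimensional subspace, but not on larger subspaces). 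Even granting that hypothesis, the extension from the initial plane to all of $X$ and the injectivity of $A$ are precisely the delicate part of Faure's proof: before any frame argument can be run in $V'$ you must first show that $f(V')$ is not contained in a lower-dimensional subspace, and ``uniqueness of the semilinear extension determined by a projective frame'' does not substitute for that verification.
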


We shall present a sufficient condition such that the Problem \ref{wmu} can be solved in positive way for general normed spaces. The proof of this result relies on the fundamental theorem of projective geometry.

\begin{prop}\label{isometry}
Let $X$ and $Y$ be real normed spaces, and let $f: X \rightarrow Y$ be a surjective phase-isometry satisfying the following conditions:\\
\emph{(a)} $f(t x)=\pm tf(x)$ for every $x\in X$ and $t\in \mathbb{R}$;\\
\emph{(b)} For all two linearly independent vectors $x$ and $y$ in $X$, there exist two real numbers $\alpha$ and $\beta$ with $|\alpha|=|\beta|=1$
such that $f(x+y)=\alpha f(x)+\beta f(y)$.\\
Then $f$ is phase-equivalent to a linear isometry.
\end{prop}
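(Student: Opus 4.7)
Plan: I would split by $\dim X$. The one-dimensional case is handled by Proposition~\ref{real}, and since Lemma~\ref{inj} makes $f$ injective and norm-preserving (using surjectivity), the dimensions of $X$ and $Y$ coincide. The main case $\dim X\ge 3$ is attacked via the fundamental theorem of projective geometry (Theorem~\ref{pro}), while $\dim X=2$ requires a direct sign-chasing argument.

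For $\dim X\ge 3$, I would verify the hypotheses of Theorem~\ref{pro}: (i) $f(X)=Y$ is not contained in any two-dimensional subspace because $\dim Y=\dim X\ge 3$, and (ii) if $z=\lambda x+\mu y\in[x,y]$, the nontrivial case has $x,y$ linearly independent and $\lambda,\mu\ne 0$, in which case assumptions (a) and (b) yield $f(z)=\alpha f(\lambda x)+\beta f(\mu y)=\pm\lambda f(x)\pm\mu f(y)\in[f(x),f(y)]$. Theorem~\ref{pro} then furnishes an injective linear $A:X\to Y$ with $[f(x)]=[A(x)]$ for all $x$. Writing $f(x)=\lambda(x)A(x)$ for $x\ne 0$, condition (a) forces $\lambda(tx)=\pm\lambda(x)$, and plugging $f=\lambda A$ into (b) and comparing coefficients against the linearly independent pair $A(x),A(y)$ gives $\lambda(x+y)=\alpha\lambda(x)=\beta\lambda(y)$. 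Hence $|\lambda(x)|=|\lambda(y)|$ for any nonzero $x,y$ (the linearly dependent case reduces to $\lambda(tx)=\pm\lambda(x)$), so $|\lambda|\equiv c$ is constant. Then $T:=cA$ is a linear isometry (since $\|T(x)\|=|\lambda(x)|\,\|A(x)\|=\|f(x)\|=\|x\|$) and $f=\varepsilon T$ with $\varepsilon:=\lambda/c$ taking values in $\{-1,1\}$.

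For $\dim X=2$, fix a basis $e_1,e_2$, set $v_i:=f(e_i)$, and use (b) to write $f(e_1+e_2)=\alpha v_1+\beta v_2$ with $\alpha,\beta\in\{-1,1\}$. Define $A$ linearly by $A(e_1):=\alpha v_1$, $A(e_2):=\beta v_2$, so that $A(e_1+e_2)=f(e_1+e_2)$. For $x=se_1+te_2$ with $s,t\ne 0$, (a) and (b) give $f(x)=\mu_1 s v_1+\mu_2 t v_2$ with $\mu_i\in\{-1,1\}$. Applying (b) to the pair $(x,e_1+e_2)$ when $s\ne t$, comparing the $v_1$- and $v_2$-coefficients of the two expressions for $f(x+e_1+e_2)$, and using that $f((s+1)e_1+(t+1)e_2)$ has the form $\pm(s+1)v_1\pm(t+1)v_2$, a parity argument forces $\mu_1\mu_2=\alpha\beta$, which is precisely $f(x)=\pm A(x)$. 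The remaining cases $s=t$ and $st=0$ follow from (a), and norm-preservation of $f$ makes $A$ an isometry. The main obstacle is this two-dimensional step: the projective-geometry shortcut is unavailable here and the sign identity must be extracted by hand.
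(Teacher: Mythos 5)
Your overall architecture is the same as the paper's (dimension one via Proposition~\ref{real}, dimension at least three via Theorem~\ref{pro}, dimension two by a hand sign-chase), and your analysis of $\lambda$ after the projective-geometry step is essentially the paper's. The genuine gap is the very first step of the $\dim X\ge 3$ case: you claim that ``the dimensions of $X$ and $Y$ coincide'' because $f$ is an injective, norm-preserving surjection. A nonlinear bijection carries no dimensional information: for any two real normed spaces of dimension at least two the unit spheres (indeed all spheres of equal radius) have the same cardinality, so norm-preserving bijections exist, e.g., from a three-dimensional space onto a two-dimensional one. Hence hypothesis (a) of Theorem~\ref{pro} --- that the range of $f$ is not contained in a two-dimensional subspace of $Y$ --- is not verified by your argument, and without it the projective-geometry machine cannot start. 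The paper closes this by using the phase-isometry property itself: for a fixed $x\in S_X$ the set $M=\{y\in S_X:\|x+y\|=\|x-y\|\}$ is infinite when $\dim X\ge 3$, equation~(\ref{phase}) sends $M$ injectively (via Lemma~\ref{inj}) into $L=\{w\in S_Y:\|w+f(x)\|=\|w-f(x)\|\}$, and $L$ has at most two elements whenever $\dim Y\le 2$; this contradiction gives $\dim Y\ge 3$. Some argument of this kind is indispensable in your write-up.

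In the two-dimensional case your coefficient comparison tacitly assumes that $v_1=f(e_1)$ and $v_2=f(e_2)$ are linearly independent in $Y$; this also needs proof (it is true: all values $f(se_1+te_2)$ lie in $[v_1,v_2]$ by (a) and (b), so surjectivity gives $Y=[v_1,v_2]$, and if $v_1,v_2$ were dependent then $S_Y$ would have only two points while the infinite set $S_X$ injects into it). Moreover the edge cases $s=-1$ or $t=-1$, where $f(x+e_1+e_2)$ is not of the two-term form $\nu_1(s+1)v_1+\nu_2(t+1)v_2$, are not among the ``remaining cases'' you list, though they can be handled by noting the $v_1$- (resp.\ $v_2$-) coefficient must vanish. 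By contrast, the paper's dimension-two argument never needs independence of the images: it first normalizes $f$ to be homogeneous, chooses a basis $x,y\in S_X$ with norming functionals satisfying $x^*(x)=y^*(y)=1$, $x^*(y)=y^*(x)=0$, and extracts the sign identity $\alpha(a)\beta(a)=\alpha(1)\beta(1)$ purely from norm identities such as $\|(a+1)x+2y\|\ge 2>a+1$, then concludes with the resulting linear isometry $g$. Your route is salvageable, but as written both the dimension claim and the independence of $v_1,v_2$ are unproved, and the first of these is the serious omission.
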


\begin{prf}
We first prove that $f$ is phase equivalent to a homogeneous mapping from condition (a). Indeed, by the axiom of choice, there is a set $L$ such that for every $0\neq x\in X$ there exists exactly one element $y\in L$ such that $x=s y$ for some $s \in \mathbb{R}$. Define $f_0: X \rightarrow Y$ by
\begin{eqnarray*} f_0(x)=f_0(s y)=s f(y), \quad \forall x=s y\in X.\end{eqnarray*} Then $f_0$ is well defined, homogeneous and phase equivalent to $f$ by condition (a). Now we may assume that $f$ is homogeneous.

If $X$ is one dimensional, then the conclusion is immediate from Corollary \ref{real}.

Suppose for the moment that $X$ is at least three dimensions. We will prove that $\dim Y \geq 3$. Suppose that, on the contrary, we have $\dim Y \leq 2$. Take an arbitrary $x\in S_X$, and set $M:=\{y\in S_X: \|x-y\|=\|x+y\|\}$ and $L:=\{w\in S_Y: \|w+f(x)\|=\|w-f(x)\|\}$. Since $\dim X\geq 3$, the set $M$ contains infinite elements (see the proof of \cite[Lemma 2.3]{Ry} for details). From equation (\ref{phase}), we have $f(y)\in L$ for every $y\in M$. As a consequence, the set $L$ contains infinite elements, because $f$ is injective by Lemma \ref{inj}. This is in contradiction with the fact that
the set $L$ contains at most two elements (see \cite[Theorem 4.35]{AMW}). This contradiction implies that $\dim Y \geq 3$. Therefore we would be able to apply Theorem \ref{pro} and find an injective linear map $A:X\rightarrow Y$ such that $[f(x)]=[A(x)]$ for every $x\in X$. Consequently, there is a function $\lambda: X\rightarrow \mathbb{R}$ such that $f(x)=\lambda(x) A(x)$ for every $x\in X$. Since $f$ is homogeneous, we have $\lambda(tx)=\lambda(x)$ for every $x\in X$ and $0\neq t\in \mathbb{R}$. Moreover, suppose that $x, y\in X$ are two linearly independent vectors. We write $f(x+y)=\alpha f(x)+\beta f(y)$ for two real numbers $\alpha$ and $\beta$ with $|\alpha|=|\beta|=1$.
We immediately obtain
\begin{eqnarray*}
\alpha\lambda(x)A(x)+\beta \lambda(y)A(y)=f(x+y)=\lambda(x+y)A(x)+\lambda(x+y)A(y).
\end{eqnarray*}
It follows that $\lambda(x+y)= \alpha\lambda(x)=\beta \lambda(y)$.
As a consequence, the $|\lambda(x)|$ has only one value which is denoted by $\lambda$. Hence we can define a desired phase function $\varepsilon :X\rightarrow \{-1,1\}$ such that $f=\varepsilon \lambda A$.
So $f$ is phase equivalent to the linear isometry $\lambda A$.

Now suppose that $X$ is two dimensional. It follows from \cite[Theorem 4.7 and Corollary 4.2]{AMW} that there exist a basis $\{x, y\}\subset S_X$ of $X$ and two linear functionals $x^*, y^*\in X^*$ of normed one such that
\begin{eqnarray*}x^*(x)=y^*(y)=1\ \mbox{and} \ x^*(y)=y^*(x)=0.\end{eqnarray*}
By condition (b), we can write
\begin{eqnarray}\label{hom1}
f(ax+y)=\alpha(a)f(ax)+\beta(a)f(y),\ |\alpha(a)|=|\beta(a)|=1
\end{eqnarray}
for every $a\in \mathbb{R}$.
Since $f$ is homogeneous, for every $a, b\,(\neq 0) \in \mathbb{R}$,
\begin{eqnarray}\label{hom2}
f(ax+by)=b f(\frac{ax}{b}+y)=\alpha(\frac{a}{b}) f(ax)+ \beta(\frac{a}{b}) f(by) ,\ |\alpha(\frac{a}{b})|=|\beta(\frac{a}{b})|=1.\end{eqnarray}
Define a mapping $g: X \rightarrow Y$ as follows:
\begin{eqnarray*}
g(0)=0, \ g(ax)=\alpha(a)\beta(a)f(ax),\ \ g(ax+by)=\alpha(\frac{a}{b})\beta(\frac{a}{b}) f(ax)+f(by)
\end{eqnarray*}
for all $a, b \,(\neq0)\in\mathbb{R}$. Then the relations (\ref{hom1}) and (\ref{hom2}) show that $g$ is a phase-isometry, and it is phase equivalent to $f$. We will prove that \begin{equation}\label{equ:6}
 \alpha(a)\beta(a)=\alpha(1)\beta(1)
\end{equation}
for all $0\neq a\in\mathbb{R}$.
The equation (\ref{equ:6}) and the definition of $g$ show that \begin{eqnarray*}g(ax+by)=ag(x)+bg(y) \quad ( a,b\in \mathbb{R}).\end{eqnarray*}
This means that $g$ is a linear isometry from $X$ onto $Y$. Thus, to see our conclusion, we only need to prove the equation (\ref{equ:6}). For this, we need the following
three equations:
\begin{eqnarray}
\label{a1} &&\{\|g(ax+y)\pm g(ax-y)\|\}=\{|2a|, 2\}, \\
\label{a2}&&\{\|g(ax+y)\pm g(x+y)\|\}=\{\|(a+1)x+2y\|, |a-1|\},\\
\label{a3}&&\{\|g(ax+ay)\pm g(ax+y)\|\}=\{\|2ax+(a+1)y\|, |a-1|\}
\end{eqnarray}
for every $a\in\mathbb{R}$.
Indeed, from the equation (\ref{a1}), we have  \begin{eqnarray*}|a|\cdot |\alpha(a)\beta(a)+\alpha(-a)\beta(-a)|=\|g(ax+y)+g(ax-y)\|\in \{|2a|, 2\},\end{eqnarray*} which yields
\begin{eqnarray*}\alpha(a)\beta(a)=\alpha(-a)\beta(-a) \quad (0\neq a \in\mathbb{R}).\end{eqnarray*}
If $0<a<1$, from the equation (\ref{a2}),
\begin{eqnarray*}|a\alpha(a)\beta(a)-\alpha(1)\beta(1)|=\|g(ax+y)-g(x+y)\|\in \{\|(a+1)x+2y\|, 1-a\}.\end{eqnarray*}
The inequality $\|(a+1)x+2y\|\geq y^*((a+1)x+2y)=2>a+1$ proves that
\begin{eqnarray*}\alpha(a)\beta(a)=\alpha(1)\beta(1) \quad \mbox{for all} \ 0<a<1.\end{eqnarray*}
If $a>1$, from the equation (\ref{a3}),
\begin{eqnarray*}
\|(\alpha(1)\beta(1)+\alpha(a)\beta(a))f(ax)+(a+1)f(y)\|\in \{\|2ax+(a+1)y\|, a-1\}.
\end{eqnarray*}
The inequality $\|2ax+(a+1)y\|\geq x^*(2ax+(a+1)y)=2a>a+1$ implies that
\begin{eqnarray*}\alpha(a)\beta(a)=\alpha(1)\beta(1) \quad \mbox{for all} \ a>1.\end{eqnarray*}
The proof is complete.
\end{prf}

\begin{rem} \label{weak}
Let $X$ and $Y$ be real normed spaces, and let $f: X \rightarrow Y$ be a surjective phase-isometry. It follows from Lemma \ref{inj} that $f(-x)=-f(x)$ for every $x\in X$. Then the conditions (a) and (b) of Proposition \ref{isometry} are equivalent to the following weak conditions:\\
\emph{(a*)} $f(t x)=\pm tf(x)$ for every $x\in S_X$ and $t>0$;\\
\emph{(b*)} For all two linearly independent vectors $x$ and $y$ in $B_X$, there exist two real numbers $\alpha$ and $\beta$ with $|\alpha|=|\beta|=1$ such that $f(x+y)=\alpha f(x)+\beta f(y)$.
\end{rem}

To show the main lemma of this paper, we recall some notations and results. For every $0\neq x\in X$, denote by $D(x)$ the set of supporting functionals of $x$; that is \begin{eqnarray*}D(x)=\{x^*\in S_{X^*}: x^*(x)=\|x\|\}.\end{eqnarray*}
A point $x\in X$ is called \emph{smooth} if there exists only one supporting functional at $x$. A normed space $X$ is said to be \emph{smooth} provided that every nonzero element $x\in X$ is a smooth point. Examples of smooth normed spaces are $\mathcal{L}^p$-spaces for $1<p<\infty$. However $\mathcal{L}^1$-spaces and $\mathcal{L}^{\infty}$-spaces are not smooth.

For every $u\in S_X$ and $x\in X$, denote by $M_u(x)$ the directional derivative of the function $x\rightarrow \|x\|$ at the point $u$ in the direction $x$: \begin{eqnarray*}
M_u(x)=\lim_{t \rightarrow 0^+}\frac{\|u+tx\|-\|u\|}{t}.
\end{eqnarray*}
By the convexity of the function $x\rightarrow \|x\|$ is convex, the directional derivative exists. In general $M_u: X\rightarrow \mathbb{R}$ is not linear, but it is sub-linear (see \cite[Lemma 1.2]{P}). It is well known that (see \cite[Proposition 2.24]{P})
\begin{eqnarray*}
M_u(x)=\max\{x^*(x): x^*\in D(u)\}.
\end{eqnarray*}
In particular, if $u$ is a smooth point then $M_u$ is the unique supporting functional at $u$.

A point $x^*$ in a $w^*$-closed convex set $C\subset X^*$ is said to be a $w^*$-exposed point of $C$ provided that there exists $x\in X$ such that $x^*(x)>y^*(x)$ for all $y^*\in C$ with $y^*\neq x^*$. It is easy to observe that $x^*\in X^*$ is a $w^*$-exposed point of the dual unit ball $B_{X^*}$ if and only if $x^*$ is the only one supporting functional for some smooth point $x\in S_X$.

We will give a modification of Figiel's theorem as corresponding to equation (\ref{phase}). This result is of its own interest but could also be used to give some affirmative answers to Problem \ref{wmu}.

\begin{lem}(Main lemma) \label{diff}
Let $X$ and $Y$ be real normed spaces, and let $f: X \rightarrow Y$ be a phase-isometry (not necessarily surjective). Then for every $w^*$-exposed point $x^*$ of $B_{X^*}$, there exists a linear functional $\varphi \in Y^*$ of norm one such that $x^*(x)=\pm \varphi(f(x))$ for all $x\in X$.
\end{lem}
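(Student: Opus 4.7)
The plan is to produce $\varphi$ as a $w^*$-cluster point of carefully chosen norm-one supporting functionals, then verify $\varphi(f(x)) = \pm x^*(x)$ via a Hahn--Banach/triangle-equality argument combined with the first-order smoothness expansion. First, since $x^*$ is a $w^*$-exposed point of $B_{X^*}$, I would fix a smooth $u \in S_X$ whose unique supporting functional is $x^*$, and set $v = f(u)$, noting $\|v\| = 1$ by Lemma \ref{inj}.

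Next, for each integer $n \geq 2$, I would apply the phase-isometry equation to the pair $(u, nu)$ to get $\{\|v + f(nu)\|, \|v - f(nu)\|\} = \{n+1, n-1\}$; hence some sign $\eta_n \in \{-1,1\}$ satisfies $\|v + \eta_n f(nu)\| = n+1 = \|v\| + \|\eta_n f(nu)\|$, a saturated triangle inequality. Hahn--Banach then produces a norm-one $\varphi_n$ attaining its norm at this vector; the saturation forces $\varphi_n(v) = 1$ (so $\varphi_n \in D(v)$) and $\varphi_n(\eta_n f(nu)) = n$. Banach--Alaoglu together with the $w^*$-closedness of $D(v)$ then supply a $w^*$-cluster point $\varphi \in D(v) \subset S_{Y^*}$, my candidate functional.

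To verify $\varphi(f(x)) = \pm x^*(x)$ for a fixed $x \in X$, I would apply the phase-isometry to $(nu, x)$, yielding $\{\|f(nu) + f(x)\|, \|f(nu) - f(x)\|\} = \{\|nu+x\|, \|nu-x\|\}$, and use the smoothness of $u$ to expand $\|nu \pm x\| = n \pm x^*(x) + o(1)$ as $n \to \infty$. Combining $\varphi_n(\eta_n f(nu) \pm f(x)) \leq \|\eta_n f(nu) \pm f(x)\|$ with $\varphi_n(\eta_n f(nu)) = n$ and summing the two inequalities gives $2n \leq 2n + o(1)$, so each is tight up to $o(1)$. This pins $\varphi_n(f(x))$ to one of $\pm x^*(x) + o(1)$ (sign possibly depending on $n$ and $x$), and passing to the $w^*$-cluster limit forces $\varphi(f(x)) \in \{x^*(x), -x^*(x)\}$.

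The hard part will be the sign bookkeeping: the active triangle-equality case $\eta_n$ at step $n$, and which of $n \pm x^*(x)$ matches $\|\eta_n f(nu) + f(x)\|$, may fluctuate with $n$ (and the latter also with $x$). This is absorbed by letting the $\pm$ in the conclusion itself depend on $x$; the simultaneous saturation of both triangle inequalities uniformly drives $\varphi_n(f(x))$ into $\{x^*(x),-x^*(x)\} + o(1)$ regardless of which case is active at step $n$, so any $w^*$-cluster value lands in $\{x^*(x),-x^*(x)\}$.
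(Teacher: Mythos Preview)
Your proposal is correct and follows essentially the same approach as the paper: choose norm-one supporting functionals $\varphi_n$ at (a sign of) $f(nu)$, extract a $w^*$-cluster point $\varphi$ via Banach--Alaoglu, and combine the smoothness expansion $\|nu\pm x\|=n\pm x^*(x)+o(1)$ with triangle-inequality saturation to pin down $\varphi(f(x))$. The only cosmetic differences are that the paper first treats the case $X=\mathbb{R}$ separately (via $g(t)=f(tu)$) and then handles the sign bookkeeping by partitioning $(0,\infty)$ into four sets $A_1,\ldots,A_4$, whereas you fold both steps into a single ``both inequalities are tight up to $o(1)$'' argument at level $n$ before passing to the cluster point; these are equivalent packagings of the same idea.
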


\begin{prf}
The proof is based upon an idea of \cite[lemma 2.4]{CDZ} for a special case $\varepsilon=0$.

We first prove that if $X=\mathbb{R}$ then there is a linear functional $\varphi \in Y^*$ of norm one such that $\varphi(f(t))=\pm t$ for all $t\in \mathbb{R}$. For every positive integer $n$, using the norm preserving property from Lemma \ref{inj}, we have $\|f(n)\|=n$. The Hahn-Banach theorem guarantees the existence of a linear functional $\varphi_n\in S_{Y^*}$ such that $\|\varphi_n\|=1$ and $\varphi_n(f(n))= n$. For every $t\in [-n, n]$, \begin{eqnarray*}
2n&=&\varphi_n (f(n)-f(t))+\varphi_n(f(n)+f(t))\\&\leq &\|f(n)-f(t)\|+\|f(n)+f(t)\|\\ &=&(n-t)+(n+t)=2n,
\end{eqnarray*}
or alternatively \begin{eqnarray*}&&\{\varphi_n (f(n)-f(t)), \varphi_n(f(n)+f(t))\}\\ &=&\{\|f(n)-f(t)\|, \|f(n)+f(t)\|\}\\ &=&\{n-t, n+t\}.\end{eqnarray*}
Then $\varphi_n(f(t))=\pm t$ for all $t\in [-n, n]$. It follows from Alaoglu's theorem that the sequence $\varphi_n$ has a cluster point $\varphi$ in the $w^*$ topology. This entails that $\|\varphi\|\leq 1$ and $\varphi(f(t))=\pm t$ for every $t\in \mathbb{R}$. Clearly, $\|\varphi\|=1$ and $\varphi$ is the desired mapping.

Now suppose that $\dim(X)>1$ and $u\in S_X$ is a smooth point such that $x^*(u)=1$. Let $g: \mathbb{R} \rightarrow Y$ be defined by for every $t\in\mathbb{R}$, $g(t)=f(tu)$. Then $g$ satisfies the functional equation (\ref{phase}). By the above, there exists $\varphi \in Y^*$ with $\|\varphi\|=1$ such that \begin{eqnarray*} \varphi(f(tu))=\varphi (g(t))=\pm t.\end{eqnarray*}
Since $u$ is a smooth point, it follows that $x^*$ is the only one supporting functional at $u$. Therefore, for every $x\in X$,
\begin{eqnarray*}
x^*(x)=\lim_{t \rightarrow 0^+}\frac{\|u+tx\|-\|u\|}{t}=\lim_{t \rightarrow +\infty}(\|tu+x\|-t).
\end{eqnarray*}
From the equation (\ref{phase}), we get \begin{eqnarray*}\{\|f(tu)+f(x)\|, \|f(tu)-x\|\}=\{\|tu+x\|, \|tu-x\|\}\end{eqnarray*}
for all $t>0$ and $x\in X$. For a fixed nonzero vector $x\in X$, the set $(0, +\infty)$ will be divided into four parts:
\begin{eqnarray*}
&&A_1:=\{t>0:\|f(tu)\pm f(x)\|=\|tu \mp x\|, \ \varphi(f(tu))=t\};\\
&&A_2:=\{t>0:\|f(tu)\pm f(x)\|=\|tu\pm x\|, \ \varphi(f(tu))=t\};\\
&&A_3:=\{t>0:\|f(tu)\pm f(x)\|=\|tu\pm x\|,  \ \varphi(f(tu))=-t\};\\
&&A_4:=\{t>0:\|f(tu)\pm f(x)\|=\|tu\mp x\|,  \ \varphi(f(tu))=-t\}.
\end{eqnarray*}
Obviously, at least one of the sets $\{A_i: i=1,2,3,4\}$ is unbounded. We shall prove that if $A_i$ is unbounded
then $$x^*(x)=(-1)^i \varphi(f(x))$$ for all $i=1,2,3,4$. Without loss of generality we can assume that $A_1$ is unbounded. Then for every $t\in A_1$,
\begin{eqnarray*}
&&\|tu+x\|-t=\|f(tu)-f(x)\|-t\geq \varphi (f(tu)-f(x))-t=-\varphi(f(x)),\\
&&\|tu-x\|-t=\|f(tu)+f(x)\|-t\geq \varphi (f(tu)+f(x))-t=\varphi(f(x)).
\end{eqnarray*}
Let $t\in A_1$ and $t \rightarrow +\infty$ in the two inequalities above, we have $$x^*(x)=-\varphi(f(x)).$$
This completes the proof.
\end{prf}

Specializing Lemma \ref{diff} to the reflexive smooth normed spaces leads to the next result.

\begin{cor}\label{sm}
Let $X$ and $Y$ be real normed spaces with $X$ being reflexive and smooth, and let $f: X \rightarrow Y$ be a phase-isometry. Then for every $x^*\in X^*$, then there exists $\varphi \in Y^*$ with $\|\varphi\|=\|x^*\|$ such that $ x^*(x)=\pm \varphi(f(x))$ for all $x\in X$.
\end{cor}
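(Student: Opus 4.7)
The plan is to deduce the corollary from the Main Lemma by showing that, under the reflexivity and smoothness hypotheses on $X$, \emph{every} norm-one functional in $X^*$ is a $w^*$-exposed point of $B_{X^*}$, and then rescaling.

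Concretely, I would proceed as follows. First, dispose of the trivial case $x^*=0$ by taking $\varphi=0$. Next, reduce to the case $\|x^*\|=1$: if $x^*\neq 0$, set $\tilde{x}^*=x^*/\|x^*\|\in S_{X^*}$; if we can produce some $\tilde{\varphi}\in S_{Y^*}$ with $\tilde{x}^*(x)=\pm\tilde{\varphi}(f(x))$ for all $x\in X$, then $\varphi:=\|x^*\|\tilde{\varphi}\in Y^*$ has $\|\varphi\|=\|x^*\|$ and satisfies $x^*(x)=\pm\varphi(f(x))$ for all $x\in X$.

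The key step is then to show that any $\tilde{x}^*\in S_{X^*}$ is a $w^*$-exposed point of $B_{X^*}$, so that the Main Lemma (Lemma \ref{diff}) applies and yields $\tilde{\varphi}$. This uses two ingredients in tandem. Since $X$ is reflexive, James' theorem implies that every $\tilde{x}^*\in S_{X^*}$ attains its norm, so there exists $u\in S_X$ with $\tilde{x}^*(u)=1$, i.e., $\tilde{x}^*\in D(u)$. Since $X$ is smooth, the set $D(u)$ is a singleton, which forces $D(u)=\{\tilde{x}^*\}$. By the characterization recalled in the paragraph preceding the Main Lemma, this is precisely what it means for $\tilde{x}^*$ to be a $w^*$-exposed point of $B_{X^*}$ (exposed by the vector $u$). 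Now the Main Lemma supplies the desired $\tilde{\varphi}\in S_{Y^*}$.

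I do not expect any serious obstacle: the statement is essentially a packaging of the Main Lemma together with the standard fact that in a reflexive smooth space the duality map $u\mapsto D(u)$ is a bijection $S_X\to S_{X^*}$, so the set of $w^*$-exposed points of $B_{X^*}$ coincides with all of $S_{X^*}$. The mildly delicate point is just recognizing that the combination \emph{reflexive} (for norm-attainment) plus \emph{smooth} (for uniqueness of the supporting functional) is exactly what is needed to upgrade ``for every $w^*$-exposed point of $B_{X^*}$'' in Lemma \ref{diff} to ``for every $x^*\in X^*$'' in this corollary.
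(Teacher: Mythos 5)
Your proposal is correct and is essentially the argument the paper intends (the paper gives no explicit proof, saying only that the corollary follows by specializing Lemma \ref{diff}): reflexivity gives norm attainment of each $\tilde{x}^*\in S_{X^*}$ at some $u\in S_X$, smoothness makes $\tilde{x}^*$ the unique supporting functional at $u$, hence a $w^*$-exposed point by the paper's own characterization, and the Main Lemma plus the trivial rescaling/zero cases finish it. One cosmetic remark: the direction you need (reflexive $\Rightarrow$ every functional attains its norm) is the elementary consequence of weak compactness of $B_X$, not the deep converse that constitutes James' theorem, so the citation could be softened accordingly.
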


Now, we are in the position to provide the verification of our first main result.  This solves the Problem \ref{wmu} in positive way when the domain $X$ is a smooth normed space.

\begin{thm}\label{th1}
Let $X$ and $Y$ be real normed spaces with $X$ being smooth, and let $f: X \rightarrow Y$ be a surjective phase-isometry. Then $f$ is phase-equivalent to a linear isometry.
\end{thm}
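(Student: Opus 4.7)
The plan is to verify that $f$ satisfies the hypotheses of Proposition~\ref{isometry} --- in the weaker form given by Remark~\ref{weak} --- and then to invoke that proposition. Since $f$ is surjective, Lemma~\ref{inj} already supplies the oddness $f(-x)=-f(x)$, so only conditions (a*) and (b*) remain to be checked. The principal tool is the Main Lemma (Lemma~\ref{diff}): smoothness of $X$ means that for every $u\in S_X$ the unique supporting functional $u^*$ is a $w^*$-exposed point of $B_{X^*}$, so Lemma~\ref{diff} delivers a norm-one functional $\varphi_u\in Y^*$ with $u^*(z)=\pm\varphi_u(f(z))$ for all $z\in X$.

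To establish (a*), I would fix $u\in S_X$ and $t>0$, and pick such a $\varphi_u$ normalized so that $\varphi_u(f(u))=1$; the Main Lemma then forces $\varphi_u(f(tu))=\pm t$. Combined with the phase-isometry identity
\[
\{\|f(u)+f(tu)\|,\ \|f(u)-f(tu)\|\}=\{1+t,\ |1-t|\},
\]
this pins one of $\|f(u)\pm f(tu)\|$ to the value $1+t=\|f(u)\|+\|f(tu)\|$, so the triangle inequality saturates along a specific signed sum. To upgrade this to the parallelism $f(tu)\in\mathbb{R}f(u)$, I plan to exploit surjectivity: for $y=(f(u)+f(tu))/2$ (or any element of the saturated segment), the unique preimage $z=f^{-1}(y)$ (injective by Lemma~\ref{inj}) obeys phase-isometry relations with both $u$ and $tu$ which, together with smoothness and a second application of the Main Lemma, force $z\in\mathbb{R}u$; this ultimately gives $f(tu)=\pm tf(u)$.

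To establish (b*), I would fix linearly independent $x,y\in B_X$ and apply the Main Lemma at the smooth points $x/\|x\|$ and $y/\|y\|$ to obtain two functionals $\varphi_x,\varphi_y\in S_{Y^*}$. Evaluating both functionals on $f(x+y)$, together with the three phase-isometry identities involving $f(x), f(y), f(x+y)$ and $f(x-y)$, confines $f(x+y)$ to the four-element set $\{\pm f(x)\pm f(y)\}$, which supplies the required $\alpha,\beta\in\{-1,1\}$ with $f(x+y)=\alpha f(x)+\beta f(y)$.

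The main obstacle is the verification of (a*): because $Y$ is not assumed strictly convex, saturation of the triangle inequality $\|a+b\|=\|a\|+\|b\|$ does not by itself imply $a\parallel b$, so the surjectivity of $f$ must be used in an essential way to convert the saturation delivered by the Main Lemma into the desired parallelism. Once (a*) and (b*) are in place, Proposition~\ref{isometry} immediately yields the phase-equivalence of $f$ with a linear isometry.
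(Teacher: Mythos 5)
Your overall skeleton (check conditions (a)/(b) of Proposition~\ref{isometry} via Lemma~\ref{diff} and then invoke that proposition) is exactly the paper's strategy, but both of your verifications have genuine gaps, and the missing ingredient in each case is the same: the paper first isolates and proves the key implication that $f(z)\in[f(x),f(y)]$ forces $z\in[x,y]$, by restricting $f$ to the finite-dimensional (hence reflexive) smooth subspace $E=[x,y,z]$, taking a functional $z_0^*\in S_{E^*}$ that vanishes on $F=[x,y]$ with $z_0^*(z)=d(z,F)>0$, and using Corollary~\ref{sm} to write $z_0^*=\pm\varphi\circ f$ on $E$; since $f(z)$ is a combination of $f(x),f(y)$ and $\varphi$ kills both up to sign, one gets $z_0^*(z)=0$, a contradiction. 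With this claim, (a) is immediate: pull back $tf(x)$ to some $z$ (surjectivity), conclude $z\in[x]$, and norm preservation plus oddness give $f(tx)=\pm tf(x)$. Your route for (a*) instead pulls back the midpoint $(f(u)+f(tu))/2$; even if you succeed in forcing its preimage $z$ into $\mathbb{R}u$ (which can indeed be done by the functional argument above), knowing $f(su)=(f(u)+f(tu))/2$ for some $s$ does not yield $f(tu)=\pm tf(u)$ without already knowing that $f$ maps $\mathbb{R}u$ into $\mathbb{R}f(u)$ -- which is precisely (a*); so as sketched the step ``this ultimately gives $f(tu)=\pm tf(u)$'' is circular/unjustified. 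The fix is simply to pull back $tf(u)$ itself, as the paper does.

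The gap in (b*) is more serious. You claim that the two functionals $\varphi_x,\varphi_y$ obtained from the Main Lemma at $x/\|x\|$ and $y/\|y\|$, together with phase identities, confine $f(x+y)$ to $\{\pm f(x)\pm f(y)\}$. Two linear functionals cannot pin a vector of a (possibly infinite-dimensional, non-strictly-convex) space $Y$ down to a four-element set, and nothing in your sketch shows that $f(x+y)$ even lies in the span of $f(x)$ and $f(y)$. The paper avoids working in $Y$ altogether: using surjectivity it chooses $x_1,x_2$ with $f(x_1)=f(x)+f(y)$ and $f(x_2)=f(x)-f(y)$, uses the key implication above to place $x_1,x_2$ in the two-dimensional smooth subspace $F=[x,y]$, and then, applying Corollary~\ref{sm} to \emph{every} $x^*\in F^*$, shows $x^*(x+y)\in\{\pm x^*(x_1),\pm x^*(x_2)\}$, whence $x+y\in\mathrm{co}\{\pm x_1,\pm x_2\}$ by Hahn--Banach separation; a short computation with the biorthogonal functionals $x_1^*,x_2^*$ then gives $x+y\in\{\pm x_1,\pm x_2\}$, and oddness yields condition (b). Note in particular that surjectivity is essential for (b*) as well, whereas your sketch does not use it there; without the pull-back to the two-dimensional domain the constraint you have in $Y$ is far too weak.
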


\begin{prf}
We first prove that $f(z)\in [f(x), f(y)]$ implies that $z\in [x, y]$ for all $x, y, z\in X$. Suppose that, on the contrary, there is $z\notin [x, y]$ satisfying $f(z)\in [f(x), f(y)]$. Set $E:=[x, y, z]$, $F:=[x, y]$ and $d:=d(z, F)=\inf\{\|z-v\|: v\in F\}$. By Hahn-Banach Theorem there is a functional $z_0^* \in S_{E^*}$ such that $z_0^*(z)=d$ and $z_0^*(v)=0$  for all $v\in F$. Note that the mapping $f: E \rightarrow Y$ (the restriction of $f$ to $E$) is a phase-isometry. By the smoothness of $E$ and Corollary \ref{sm}, there exists a linear functional $\varphi\in S_{Y^*}$ such that $z_0^*(u)=\pm \varphi (f(u))$  for all $u\in E$. Write $f(z)=\alpha f(x) + \beta f(y)$ for some $\alpha, \beta \in \mathbb{R}$. Consequently,
\begin{eqnarray*}|z_0^*(z)|=|\varphi(f(z))|=|\alpha \varphi (f(x)) + \beta \varphi(f(y))|=|\alpha z_0^*(x) + \beta z_0^*(y)|=0,\end{eqnarray*} which contradicts the fact that $z_0^*(z)=d>0$.

Now we prove that $f$ satisfies the condition (a) of Proposition \ref{isometry}. For every $x\in X$ and $t\in \mathbb{R}$, we can find a vector $z\in X$ such that $f(z)=tf(x)$. By the prevous result and the fact that $f$ has the norm preserving property, we get $z=\pm tx$. To prove the condition (b), let $x,y\in X$ be linearly independent and set $F:=[x, y]$. Then we can choose two vectors $x_1, x_2\in F$ such that $f(x_1)=f(x)+f(y)$ and  $f(x_2)=f(x)-f(y)$. To see our conclusion, we only need to prove that $x+y\in \{\pm x_1, \pm x_2\}$.
Since $F$ is smooth, Corollary \ref{sm} indicates that for every $x^*\in F^*$ there is a linear functionals $\varphi\in S_{Y^*}$ such that \begin{eqnarray*}x^*(v)=\pm \varphi (f(v))\quad \mbox{for all} \ v\in F.\end{eqnarray*}
In particular, we have
\begin{eqnarray*}
x^*(x+y)&\in&\{\pm \varphi (f(x)+f(y)),  \pm \varphi (f(x)-f(y))\}\\
&=&\{\pm \varphi (f(x_1)),  \pm \varphi (f(x_2))\}\\&=&\{\pm x^*(x_1), \pm x^*(x_2)\}.
\end{eqnarray*}
By the Hahn-Banach separation Theorem, we get $x+y$ belongs to the closed convex hull of $A:=\{\pm x_1, \pm x_2\}$. In fact, the convex hull of $A$
is closed, so we have $x+y\in co(A)$. Note that the linear independence of $x$ and $y$ in $X$ implies that $f(x)$ and $f(y)$ are linearly independent, and so are $x_1$ and $x_2$. Therefore, we can choose two linear functionals ${x_1}^*, {x_2}^*  \in F^*$ such
that ${x_1}^*(x_1)={x_2}^*(x_2)=1$ and ${x_1}^*(x_2)={x_2}^*(x_1)=0$. It follows that $\{{x_1}^*(x+y), {x_2}^*(x+y)\}\subset\{-1, 0, 1\}$. Since $x+y$ belongs to the convex hull of $\{\pm x_1, \pm x_2\}$, we can write $x+y=sx_1+tx_2$ for some $s,t\in\mathbb{R}$ with $|s|+|t|\leq 1$. A short computation shows that $\{s, t\} \subset\{-1, 0, 1\}$, and thus $x+y\in \{\pm x_1, \pm x_2\}$.
\end{prf}

The following result is an immediate consequence of Theorem \ref{th1}.

\begin{cor}
Let $X$ be an $\mathcal{L}^p$-space for $1<p<\infty$, and let $Y$ be a normed space. Suppose that $f: X \rightarrow Y$ is a surjective phase-isometry. Then $f$ is phase-equivalent to a linear isometry.
\end{cor}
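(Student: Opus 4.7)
The plan is to observe that this corollary is essentially a one-line application of Theorem \ref{th1}: I only need to verify that $X = \mathcal{L}^p(\mu)$, for $1 < p < \infty$ and an arbitrary measure $\mu$, is a smooth normed space. Once smoothness is in hand, Theorem \ref{th1} applies verbatim (with $X$ smooth and the target $Y$ an arbitrary real normed space) to conclude that any surjective phase-isometry $f : X \to Y$ is phase-equivalent to a linear isometry.

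To verify smoothness, I would recall the standard computation of the Gâteaux derivative of the $\mathcal{L}^p$-norm. For a nonzero $g \in \mathcal{L}^p(\mu)$ and any $h \in \mathcal{L}^p(\mu)$, the function $t \mapsto \|g + th\|_p$ is differentiable at $t = 0$, and the derivative depends linearly and continuously on $h$. In fact the unique supporting functional at $g$ is given by the explicit formula
\[
\varphi_g(h) \;=\; \frac{1}{\|g\|_p^{\,p-1}} \int |g|^{p-1}\,\operatorname{sgn}(g)\, h \, d\mu,
\]
which lies in $\mathcal{L}^q(\mu)$ with $\tfrac{1}{p} + \tfrac{1}{q} = 1$ and has dual norm one by H\"older's inequality (the equality case in H\"older forces uniqueness). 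Hence every nonzero point of $\mathcal{L}^p(\mu)$ is smooth, so $\mathcal{L}^p(\mu)$ is a smooth normed space.

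Alternatively one may simply quote the classical fact that, for $1 < p < \infty$, the space $\mathcal{L}^p(\mu)$ is uniformly smooth, hence a fortiori smooth. With smoothness confirmed, Theorem \ref{th1} gives the conclusion directly, and no further argument is required.

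There is no real obstacle here: the content of the corollary lies entirely in Theorem \ref{th1}, and the verification of smoothness for $\mathcal{L}^p$ with $1 < p < \infty$ is completely standard. The only minor point to note is that the theorem requires no assumption on the target $Y$ beyond its being a real normed space, so no additional regularity on $Y$ has to be checked.
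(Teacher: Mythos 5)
Your proposal is correct and follows exactly the paper's route: the paper states this corollary as an immediate consequence of Theorem \ref{th1}, relying on the standard fact (noted earlier in the paper) that $\mathcal{L}^p$-spaces with $1<p<\infty$ are smooth. Your verification of smoothness via the unique supporting functional is just the standard detail the paper leaves implicit, so nothing is missing.
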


\section{Phase-isometries on $\mathcal{L}^\infty(\Gamma)$-type and $\ell^1$-type spaces}

In this section we consider phase-isometries from $\mathcal{L}^\infty(\Gamma)$-type or $\ell^1$-type spaces onto general normed spaces. We shall show that all such mappings are phase equivalent to real linear isometries.

To show the following results of this section, we recall some notations and results about Birkhoff orthogonality. Let $X$ be a real normed space. For all $x,y\in X$, let us denote by $x \bot y$ the \emph{Birkhoff orthogonality} relation on $X$ as:
\begin{eqnarray*}
\|x+t y\|\geq \|x\| \quad \mbox{for all} \ t\in \mathbb{R}.
\end{eqnarray*}
This relation is clearly homogeneous, but neither symmetric nor additive, unless the norm comes from an inner product. Let $A$ be a subset of $X$.
We say that $x \bot A$ ($A\bot x$, respectively) if $x \bot z$ ($z \bot x$, resp) holds for every $z\in A$. The Birkhoff orthogonality can be easily characterized by using the directional derivatives of the norm. For more properties of Birkhoff orthogonality, we refer the reader to the survey paper \cite{AMW} and references therein.

The following is an important property of Birkhoff orthogonality from \cite[Corollary 4.2.]{AMW} and \cite[Theorem 2.1]{J}.

\begin{lem}If $x$ and $y$ are two elements of a normed linear space, then $x \bot y$ if and only if there exists a supporting functional $x^*\in D(x)$ at $x$ such that $x^*(y)=0$. Moreover, for every $x^*\in D(x)$, the set $Z:=\{x\in X: x^*(x)=0\}$ is a hyperplane through the origin such that $x\bot Z$.
\end{lem}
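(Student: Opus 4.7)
The plan is to verify the two directions of the equivalence separately and then derive the ``moreover'' clause as an immediate consequence. The easy direction is straightforward: if $x^{*}\in D(x)$ satisfies $x^{*}(y)=0$, then for every $t\in\mathbb{R}$,
\begin{eqnarray*}
\|x+ty\|\geq x^{*}(x+ty)=x^{*}(x)+tx^{*}(y)=\|x\|,
\end{eqnarray*}
which is precisely $x\bot y$.

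For the nontrivial direction, I would build a candidate functional on $[x,y]$ and then appeal to Hahn--Banach. Assume $x\bot y$; we may suppose $x$ and $y$ are linearly independent, since otherwise $y\in[x]$ forces $y=0$ (the inequality $\|x+ty\|\geq\|x\|$ for all $t\in\mathbb{R}$ is incompatible with $y=cx$ for $c\neq 0$, $x\neq 0$) and the conclusion is trivial. Define $\varphi:[x,y]\to\mathbb{R}$ by $\varphi(\alpha x+\beta y)=\alpha\|x\|$. For $\alpha\neq 0$, Birkhoff orthogonality yields
\begin{eqnarray*}
|\varphi(\alpha x+\beta y)|=|\alpha|\|x\|\leq |\alpha|\cdot\|x+(\beta/\alpha)y\|=\|\alpha x+\beta y\|,
\end{eqnarray*}
while for $\alpha=0$ the bound is trivial. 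Hence $\|\varphi\|\leq 1$ on $[x,y]$, and since $\varphi(x)=\|x\|$ we in fact have $\|\varphi\|=1$. A Hahn--Banach extension of $\varphi$ to a norm-one functional $x^{*}\in X^{*}$ produces the required member of $D(x)$ with $x^{*}(y)=\varphi(y)=0$.

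For the ``moreover'' clause, fix $x^{*}\in D(x)$ and let $Z=\ker x^{*}$. Since $x^{*}\ne 0$, $Z$ is a closed hyperplane through the origin, and for every $z\in Z$ and every $t\in\mathbb{R}$,
\begin{eqnarray*}
\|x+tz\|\geq x^{*}(x+tz)=x^{*}(x)=\|x\|,
\end{eqnarray*}
so $x\bot z$, and therefore $x\bot Z$.

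The only (modest) point requiring care is the norm estimate $\|\varphi\|\leq 1$ on the two-dimensional subspace $[x,y]$: one reduces the inequality $|\varphi(\alpha x+\beta y)|\leq\|\alpha x+\beta y\|$ to the defining condition $\|x+ty\|\geq\|x\|$ by factoring out $|\alpha|$ and setting $t=\beta/\alpha$. Once this is secured, both the Hahn--Banach extension and the ``moreover'' part are completely routine, since they rest on the same basic inequality $\|x+tz\|\geq x^{*}(x+tz)$ combined with $x^{*}\in D(x)$.
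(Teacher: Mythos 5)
Your proof is correct: the easy direction, the norm-one extension of $\varphi(\alpha x+\beta y)=\alpha\|x\|$ from $[x,y]$ via Hahn--Banach, and the ``moreover'' clause are exactly the classical argument of James, which is what the paper relies on -- the paper does not prove this lemma itself but quotes it from \cite[Corollary 4.2]{AMW} and \cite[Theorem 2.1]{J}. The only point to flag is that your reduction in the linearly dependent case tacitly assumes $x\neq 0$ (for $x=0$ the functional $\varphi$ is zero and cannot have norm one; one must instead take any norm-one functional vanishing on $[y]$, which exists whenever $\dim X\geq 2$), a harmless convention since the lemma is only ever applied to nonzero $x$.
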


The statement of Proposition \ref{isometry} remains valid if we replace condition (b) by some properties of Birkhoff orthogonality. We will state another sufficient condition to solve the Problem \ref{wmu} for some special normed spaces, which will be of use later.

\begin{prop}\label{bot}
Let $X$ and $Y$ be real normed spaces, and let $f: X \rightarrow Y$ be a surjective phase-isometry satisfying the following conditions:\\
\emph{(a)} $f(t x)=\pm tf(x)$ for every $x\in S_X$ and $t>0$;\\
\emph{(b)} There exist a nonzero vector $x_0\in S_X$ and a hyperplane $Z\subset X$ through the origin such that $x_0\bot Z$ and $Z\bot x_0$. Moreover,  for every $z\in Z$, there exist two real numbers $\alpha(z)$ and $\beta(z)$ with $|\alpha(z)|=|\beta(z)|=1$
such that $f(z+x_0)=\alpha(z) f(z)+\beta(z) f(x_0)$.\\
Then $f$ is phase-equivalent to a linear isometry.
\end{prop}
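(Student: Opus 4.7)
My plan is to reduce Proposition \ref{bot} to Proposition \ref{isometry} by upgrading the restricted additivity condition (b) to the full pairwise condition (b*) of Remark \ref{weak}, using the Birkhoff-orthogonality hypothesis as the bridge. The opening mirrors the proof of Proposition \ref{isometry}: Lemma \ref{inj} gives that $f$ is injective, odd, and norm-preserving, and condition (a) together with the axiom-of-choice argument lets me assume without loss of generality that $f$ is positively homogeneous, so $f(tx) = tf(x)$ for all $x \in X$ and $t \in \mathbb{R}$.

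Next, the Birkhoff-orthogonality lemma stated above the proposition yields $x_0^* \in D(x_0)$ with $x_0^*|_Z = 0$ (from $x_0 \bot Z$) and, for each $z \in Z \setminus \{0\}$, some $z^* \in D(z)$ with $z^*(x_0) = 0$ (from $Z \bot x_0$). For each fixed nonzero $z \in Z$ I would restrict $f$ to the two-dimensional subspace $F_z := [x_0, z]$. On $F_z$, condition (a) still holds, and homogeneity combined with condition (b) gives the full two-dimensional form of condition (b) of Proposition \ref{isometry} inside $F_z$: for $az,\, bx_0$ linearly independent with $a, b \neq 0$, $f(az + bx_0) = b\, f(az/b + x_0) = \alpha' f(az) + \beta' f(bx_0)$ for some $|\alpha'| = |\beta'| = 1$. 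The restricted functionals $x_0^*|_{F_z}$, $z^*|_{F_z}$ are precisely the dual basis used in the two-dimensional case of Proposition \ref{isometry}'s proof, so running that argument inside $F_z$ produces a local phase $\varepsilon_z : F_z \to \{-1, 1\}$ making $\varepsilon_z f|_{F_z}$ a linear isometry.

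Gluing these local phases into a single global phase is the final and hardest step. Normalizing each $\varepsilon_z$ so that $\varepsilon_z(x_0) = 1$, the ambiguity that remains is an independent sign on each plane $F_z$. To fix these signs coherently and extract (b*), I would apply the phase-isometry equation to the pair $(z_1 + x_0,\, z_2 + x_0)$ for linearly independent $z_1, z_2 \in Z$: the right-hand side is $\{\|z_1 + z_2 + 2x_0\|,\, \|z_1 - z_2\|\}$, while the left-hand side can be expanded using the local linearity of $\varepsilon_{z_i} f$ on $F_{z_i}$. Controlling the right-hand side via the strict lower bound $x_0^*(z_1 + z_2 + 2x_0) = 2$ and the functionals $z^*$, and matching terms, should force a relation of the form $f(z_1 + z_2) = \gamma_1 f(z_1) + \gamma_2 f(z_2)$ with $|\gamma_1| = |\gamma_2| = 1$. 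This is precisely condition (b*), whereupon Proposition \ref{isometry} concludes. The main obstacle is exactly this coherence step: condition (b) only couples elements of $Z$ with $x_0$ (never two elements of $Z$ directly), so $x_0$ must be deployed as an auxiliary lever through the phase-isometry equation, and the Birkhoff-orthogonality-induced functionals are the only tool available to control the resulting norm expressions.
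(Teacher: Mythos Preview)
Your reduction plan has a genuine gap at the gluing step, and the difficulty is more serious than you indicate. First, the phase-isometry identity applied to $(z_1+x_0,\,z_2+x_0)$ yields only the \emph{norm} information
\[
\{\|\alpha_1 f(z_1)\pm\alpha_2 f(z_2)+(\beta_1\pm\beta_2)f(x_0)\|\}=\{\|z_1+z_2+2x_0\|,\ \|z_1-z_2\|\},
\]
and $f(z_1+z_2)$ does not appear anywhere in this equation; there is no evident mechanism by which these two scalars ``force'' the vector identity $f(z_1+z_2)=\gamma_1 f(z_1)+\gamma_2 f(z_2)$. Second, even granting that identity for all linearly independent $z_1,z_2\in Z$, this is \emph{not} condition (b*) of Remark~\ref{weak}: (b*) asks for $f(x+y)=\alpha f(x)+\beta f(y)$ for \emph{every} linearly independent pair $x,y\in B_X$. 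For generic $x=z_1+ax_0$, $y=z_2+bx_0$ you would need $f((z_1+z_2)+(a+b)x_0)$ to split as $\alpha f(z_1+ax_0)+\beta f(z_2+bx_0)$, and a coefficient match (using (b) on each piece) imposes sign constraints such as $\beta(a+b)=\alpha'\beta_1 a+\beta'\beta_2 b$ that need not be solvable with $|\alpha'|=|\beta'|=1$. So the proposed route back to Proposition~\ref{isometry} is not closed.

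The paper proceeds quite differently and avoids (b*) altogether. After passing to a homogeneous $f$, it defines a single global map $g$ by $g(z)=\alpha(z)\beta(z)f(z)$ and $g(z+ax_0)=\alpha(z/a)\beta(z/a)f(z)+f(ax_0)$, and then reruns the two-dimensional computation from Proposition~\ref{isometry} \emph{simultaneously for all $z\in S_Z$} (using $x_0\bot Z$ and $Z\bot x_0$ for the needed inequalities) to obtain $\alpha(az)\beta(az)=\alpha(z)\beta(z)$; this makes $g$ homogeneous with $g(z+ax_0)=g(z)+ag(x_0)$. The key new idea is then a large-shift trick: for arbitrary $x_1,x_2\in X$ pick $a>\|x_1\|+\|x_2\|$ and apply the phase-isometry equation to $(x_1+ax_0,\,x_2+ax_0)$; the additivity just proved gives $g(x_i+ax_0)=g(x_i)+ag(x_0)$, and the size of $a$ forces $\|g(x_1)-g(x_2)\|=\|x_1-x_2\|$. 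Thus $g$ is a surjective isometry, hence linear by Mazur--Ulam. This sidesteps entirely the coherence problem you flagged.
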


\begin{prf}
The condition (a) implies that $f$ is phase equivalent to a homogeneous mapping by Remark \ref{weak} and the proof of Proposition \ref{isometry}. Without loss of generality we can assume that $f$ is homogeneous. From condition (b), we write \begin{eqnarray*}f(z+ax_0)=\alpha(\frac{z}{a}) f(z)+ \beta(\frac{z}{a})f(ax_0) ,\ |\alpha(\frac{z}{a})|=|\beta(\frac{z}{a})|=1 \quad (0\neq a\in \mathbb{R}, \ z\in Z ).\end{eqnarray*}
Define a mapping $g: X \rightarrow Y$ as follows:
\begin{eqnarray*}
g(z)=\alpha(z) \beta(z) f(z), \quad g(z+ ax_0)=\alpha(\frac{z}{a}) \beta(\frac{z}{a}) f(z)+ f(ax_0) \quad (0\neq a\in \mathbb{R},\ z\in Z).\end{eqnarray*}
Then $g$ is a phase-isometry, which is phase equivalent to $f$.
Hence, for every $z\in S_Z$ and $a\in\mathbb{R}$, we get
\begin{eqnarray}
\label{b1} &&\{\|g(az+x_0)\pm g(az-x_0)\|\}=\{|2a|, 2\}, \\
\label{b2}&&\{\|g(az+x_0)\pm g(z+x_0)\|\}=\{\|(a+1)z+2x_0\|, |a-1|\},\\
\label{b3}&&\{\|g(az+ax_0)\pm g(az+x_0)\|\}=\{\|2az+(a+1)x_0\|, |a-1|\}.
\end{eqnarray}
Iterating the technique in the proof of Proposition \ref{isometry}, we can prove that
\begin{eqnarray}\label{b4}\alpha(az)\beta(az)=\alpha(z)\beta(z) \quad (0\neq a\in \mathbb{R}, z\in S_Z).\end{eqnarray}
In fact, the equation (\ref{b1}) implies that
\begin{eqnarray*}\alpha(az)\beta(az)=\alpha(-az)\beta(-az), \quad \mbox{for all}\ z\in S_Z, \ 0\neq a\in\mathbb{R}.\end{eqnarray*}
The equation (\ref{b2}) ((\ref{b3}), respectively) implies that $\alpha(az)\beta(az)=\alpha(z)\beta(z)$ for all $z\in S_Z$ and $0<a<1$ ($a>1$, resp). The definition of $g$ and equation (\ref{b4}) show that the mapping $g$ is homogeneous and surjective, and moreover
\begin{eqnarray*}g(z+ax_0)=g(z)+ ag(x_0)\quad \mbox{ for every} \ z\in Z\, \mbox{and} \ a\in\mathbb{R}.\end{eqnarray*} Next, we will show that $g$ is an isometry, and thus $g$ is linear by
the Mazur-Ulam Theorem.
For any two vectors $x_1, x_2\in X$, we write $x_1=z_1+t_1 x_0$ and $x_2=z_2+t_1 x_0$ for some $z_1, z_2\in H$ and $t_1, t_2\in \mathbb{R}$.
Choose a positive number $a>\|x_1\|+\|x_2\|$, we have
\begin{eqnarray*}
&&\{\|g(x_1)+g(x_2)+2ag(x_0)\|,\| g(x_1)-g(x_2)\|\}\\
&=&\{\|g(x_1+ax_0)+g(x_2+ax_0)\|,\| g(x_1+ax_0)-g(x_2+ax_0)\|\}\\
&=&\{\|x_1+x_2+2ax_0\|, \|x_1-x_2\|\}
\end{eqnarray*}
The inequality $\|g(x_1)+g(x_2)+2ag(x_0)\|>2a-\|x_1\|-\|x_2\|>\|x_1-x_2\|$ implies that $\| g(x_1)-g(x_2)\|=\|x_1-x_2\|$ for every
$x_1, x_2\in X$. This completes the proof.
\end{prf}

Let $\Gamma$ be a nonempty index set. The $\ell^{\infty}(\Gamma)$ space is \begin{align*}\{x=\{x_\gamma\}_{\gamma\in\Gamma}:  \| x\| =\sup\limits_{\gamma\in\Gamma} |x_\gamma|<\infty,\ x_\gamma\in\mathbb{R},\ \gamma\in\Gamma \}.
\end{align*}
The $\mathcal{L}^\infty(\Gamma)$-type spaces are the subspaces of $\ell^\infty(\Gamma)$ containing all $e_\gamma {'s}$ ($\gamma\in\Gamma$). For example, the spaces $c_0(\Gamma),c(\Gamma)$ and $\ell^\infty(\Gamma)$ are all $\mathcal{L}^\infty(\Gamma)$-type spaces. For every $x=\{x_\gamma\}_{\gamma\in\Gamma}\in \mathcal{L}^\infty(\Gamma)$, we write $x=\{x_\gamma\}$, and omit the subscripts $\gamma\in\Gamma$ for simplicity of notation. Moreover, we denote the support set of $x$ by $\Gamma_x$, i.e., $$\Gamma_x=\{\gamma\in \Gamma: x_{\gamma} \neq 0 \}.$$
For every $\gamma \in \Gamma$, the coordinate functional $e_{\gamma}^*$ on $\mathcal{L}^\infty(\Gamma)$ defined by $e_{\gamma}^*(x)=x_\gamma$ is the unique supporting functional at $e_{\gamma}$, since $e_{\gamma}$ is a smooth point in $\mathcal{L}^\infty(\Gamma)$.

Let us consider a sign mapping
$\theta: \mathcal{L}^\infty(\Gamma) \rightarrow \ell^\infty(\Gamma)$, given by $\theta (x)=\{\mbox{sign}(x_\gamma)\}$ for every $x=\{x_\gamma\}\in \mathcal{L}^\infty(\Gamma)$. Obviously, $\theta (0)=0$, $\theta(e_{\gamma})=e_{\gamma}$ for every $\gamma\in \Gamma$ and $\theta (x)$ is a norm-one element in $\ell^\infty(\Gamma)$ for every nonzero $x\in \mathcal{L}^\infty(\Gamma)$. Moreover, for all $x, y\in\mathcal{L}^\infty(\Gamma)$ with $\Gamma_x\cap\Gamma_y=\emptyset$ and $t>0$, we have $\theta(x+y)=\theta(x)+\theta(y)$, $\theta(tx)=\theta(x)$ and $\theta(-x)=-\theta(x)$.

\begin{thm}
Let $X$ be an $\mathcal{L}^\infty(\Gamma)$-type space, and let $Y$ be a normed space. Suppose that $f:X\rightarrow Y$ is a surjective phase-isometry. Then $f$ is phase-equivalent to a linear isometry.
\end{thm}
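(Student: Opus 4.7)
The plan is to apply Proposition \ref{bot} with the coordinate direction $x_0:=e_{\gamma_0}$ for an arbitrary fixed $\gamma_0\in\Gamma$ and the complementary hyperplane $Z:=\{x\in X:x_{\gamma_0}=0\}$. The supremum norm makes the Birkhoff orthogonalities $x_0\bot Z$ and $Z\bot x_0$ immediate from the identity $\|z+tx_0\|=\max(\|z\|,|t|)$, valid for every $z\in Z$ and $t\in\mathbb{R}$. Moreover, each $e_\gamma$ is a smooth point of $X$ whose unique supporting functional is the coordinate functional $e_\gamma^*$, so each $e_\gamma^*$ is a $w^*$-exposed point of $B_{X^*}$. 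Applying Lemma \ref{diff} for every $\gamma\in\Gamma$ then produces a family $\{\varphi_\gamma\}_{\gamma\in\Gamma}\subset S_{Y^*}$ satisfying
\[
\varphi_\gamma(f(u))=\pm u_\gamma,\qquad u\in X,\ \gamma\in\Gamma,
\]
with the sign depending on $u$ and $\gamma$. After replacing $\varphi_{\gamma_0}$ by its negative if necessary, we may assume $\varphi_{\gamma_0}(f(x_0))=1$; then $\varphi_{\gamma_0}$ vanishes on $f(Z)$ and $\varphi_{\gamma_0}(f(z+x_0))\in\{-1,1\}$ for every $z\in Z$.

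The first substantive step is to verify condition (a) of Proposition \ref{bot}: $f(tx)=\pm tf(x)$ for every $x\in S_X$ and $t>0$. By surjectivity, pick $y\in X$ with $f(y)=tf(x)$; Lemma \ref{inj} gives $\|y\|=t$. Evaluating each $\varphi_\gamma$ at $f(y)=tf(x)$ yields $\pm y_\gamma=\pm tx_\gamma$, so $y_\gamma\in\{tx_\gamma,-tx_\gamma\}$ for every $\gamma$. Partition the support of $x$ into the indices where $y_\gamma=tx_\gamma$ and those where $y_\gamma=-tx_\gamma$, and compute $\|y\pm x\|$ in the sup norm; the pair $(y,x)$ forces $\{\|y+x\|,\|y-x\|\}=\{t+1,|t-1|\}$. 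Combining this with the pairings $(y,e_\gamma)$ for $\gamma$ in the ``conflicting'' index set (whose relevant norms are controlled via the $\varphi_{\gamma'}$) rules out the mixed-sign case and forces $y=\pm tx$, so $f(tx)=\pm tf(x)$.

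For condition (b), fix $z\in Z$ with $\|z\|=1$; the general case then follows by the homogeneity established in step (a). The sup norm gives $\|z\pm x_0\|=1$ and $\|2z+x_0\|=\|z+2x_0\|=2$, and the phase-isometry equation applied to $(z+x_0,z)$ and $(z+x_0,x_0)$ yields
\[
\{\|f(z+x_0)\pm f(z)\|\}=\{2,1\},\qquad \{\|f(z+x_0)\pm f(x_0)\|\}=\{2,1\}.
\]
Define $\alpha(z),\beta(z)\in\{-1,1\}$ by $\|f(z+x_0)+\alpha(z)f(z)\|=\|f(z+x_0)+\beta(z)f(x_0)\|=2$; automatically $\beta(z)=\varphi_{\gamma_0}(f(z+x_0))$. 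Setting $v:=f(z+x_0)-\beta(z)f(x_0)$, one has $\|v\|=1$ and $\varphi_{\gamma_0}(v)=0$. By surjectivity, write $v=f(w')$; then $w'_{\gamma_0}=\pm\varphi_{\gamma_0}(v)=0$, so $w'\in Z$ and $\|w'\|=1$. Applying the phase-isometry equation to the pairs $(w',x_0)$, $(w',z)$, and $(w',z\pm x_0)$ — whose right-hand sides are easily computed in the sup norm — pins down $w'=\alpha(z)z$, giving $f(z+x_0)=\alpha(z)f(z)+\beta(z)f(x_0)$.

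With both hypotheses of Proposition \ref{bot} verified via Remark \ref{weak}, the theorem follows. The principal obstacle is condition (a): since the sup norm is far from strictly convex, the pair $(y,x)$ by itself only restricts $y$ up to a coordinate-wise sign pattern on the support of $x$, and eliminating the non-constant sign patterns requires the simultaneous use of every $\varphi_\gamma$ along with auxiliary phase-isometry pairings with the coordinate vectors.
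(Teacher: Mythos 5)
Your overall frame coincides with the paper's: coordinate functionals $\varphi_\gamma$ from Lemma \ref{diff}, then Proposition \ref{bot} (via Remark \ref{weak}) with $x_0=e_{\gamma_0}$ and $Z=\ker e_{\gamma_0}^*$, and your reduction of condition (b) to identifying the preimage $w'$ of $f(z+x_0)-\beta f(x_0)$ is sound as far as it goes. The genuine gap is exactly at the point you yourself flag: eliminating non-constant sign patterns. First, a small omission: to compute the left-hand norms at all you need $\|w\|=\sup_\gamma|\varphi_\gamma(w)|$ for $w\in Y$ (this does follow from surjectivity and norm preservation, as in the paper's Step 1, but must be stated; the functionals alone give only lower bounds). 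Granting that identity, the pairings you list provably cannot finish the job. The pair $(y,e_\gamma)$ is vacuous: both two-element sets $\{\|f(y)\pm f(e_\gamma)\|\}$ and $\{\|y\pm e_\gamma\|\}$ are determined by the two numbers $|y_\gamma|$ and $\sup_{\gamma'\neq\gamma}|y_{\gamma'}|$ alone, so the equation holds automatically for \emph{every} candidate with the correct coordinate moduli and carries no information about relative signs. The pair $(y,x)$ only excludes sign conflicts on coordinates of large modulus: for $t=2$, $x=e_{\gamma_1}+\tfrac14 e_{\gamma_2}$, the mixed-sign candidate $y=2e_{\gamma_1}-\tfrac12 e_{\gamma_2}$ gives $\{\|y+x\|,\|y-x\|\}=\{3,1\}=\{t+1,|t-1|\}$ and passes every pairing with every coordinate vector, so your constraints do not force $y=\pm tx$. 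The same defect recurs in (b): from $(w',x_0)$, $(w',z)$ and $(w',z\pm x_0)$ one only extracts that $w'$ agrees with $\pm z$ except possibly on coordinates with $|z_\gamma|\le\tfrac12\|z\|$ (e.g.\ $z=e_{\gamma_1}+\tfrac12 e_{\gamma_2}$, $w'=e_{\gamma_1}-\tfrac12 e_{\gamma_2}$ is not excluded), so $w'=\alpha(z)z$ is not pinned down.

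The paper closes precisely this hole with the sign map $\theta$. Its Step 2 proves $\theta(f(x))=\pm f(\theta(x))$ by pairing $x$ against the ``flat'' vectors $\theta(x)$ and $\|x\|\theta(x)$, all of whose nonzero coordinates have full modulus; for such partners a sign conflict at any single coordinate $\gamma\in\Gamma_x$ forces a norm of size $\|x\|+|x_\gamma|$, which is incompatible with the prescribed value $\|x\|-\inf_{\gamma\in\Gamma_x}|x_\gamma|$, so the relative sign pattern is rigid. Condition (a) then follows from $\theta(f(tx))=\pm f(\theta(tx))=\pm f(\theta(x))=\pm\theta(f(x))$, and condition (b) is checked after replacing $z$ by $\theta(z)$, where all moduli equal $1$ and the computation $\{\sup_\gamma|b_\gamma+c_\gamma|\vee 1,\ \sup_\gamma|b_\gamma-c_\gamma|\vee 1\}=\{2,1\}$ leaves no room for mixed signs. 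Some device of this kind---pairing against vectors with full-size entries on the suspect support---is indispensable; with only $(y,x)$ and coordinate-vector pairings your argument cannot be completed.
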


\begin{proof}
The proof will be divided into three steps. The steps (2) and (3) were proved in \cite[Lemmas 2.4 and 2.5]{JT}. A brief sketch of the proof is given below for the reader's convenience.

{\bf Step 1}: By Lemma \ref{diff}, for every $\gamma\in\Gamma$ there exists a linear functional $\varphi_{\gamma} \in S_{Y^*}$ such that
$e_{\gamma}^*(x)=\pm \varphi_{\gamma}(f(x))$ for all $x\in X$.
Moreover, the functionals $\{\varphi_{\gamma}: \gamma\in \Gamma\}$ can be easily modified to obey the condition:
\begin{eqnarray*}
\varphi_{\gamma}(f(e_{\gamma}))=1, \ \varphi_{\gamma}(f(e_{\gamma'}))=0 \quad \mbox{for all}\ \gamma, \gamma'\in \Gamma, \ \gamma'\neq \gamma .
\end{eqnarray*}
We may consider the space $Y$ to be a function space on $\Gamma$ given by $w(\gamma)=\varphi_{\gamma}(w)$ for every $w\in Y,  \gamma\in \Gamma$. For every $x=\{x_\gamma\}\in X$, we have
\begin{eqnarray*}
|f(x)(\gamma)|=|\varphi_{\gamma}(f(x))|=|e_{\gamma}^*(x)|= |x_\gamma|.
\end{eqnarray*}
Consequently,
\begin{eqnarray*}
\sup\{|f(x)(\gamma)|: \gamma\in \Gamma\}=\sup\{|x_{\gamma}|: \gamma\in \Gamma\}=\|x\|=\|f(x)\|.
\end{eqnarray*}
Thus we can regard $Y$ as a subspace in $\ell^\infty(\Gamma)$. Moreover, for every $x=\{x_\gamma\}\in X$, we obtain the formula $f(x)=\{f(x)_{\gamma}\}\in Y$, where $|f(x)_{\gamma}|=|x_{\gamma}|$ for every $\gamma\in \Gamma$.

{\bf Step 2}: We will prove that $\theta(f(x))=\pm f(\theta(x))$  for every $x\in X$.
From the equation (\ref{phase}),
\begin{eqnarray*}
&&\{\|f(\theta(x))\pm f(\|x\| \theta(x))\|\}=\{\|\theta(x)\pm \|x\| \theta(x)\|\}=\{1+\|x\|, |1-\|x\||\};\\
&&\{\|f(\|x\|\theta(x))\pm f(x)\|\}=\{\|\|x\|\theta(x) \pm x\|\}=\{2\|x\|, \|x\|-\inf_{\gamma\in \Gamma_x}|x(\gamma)|\}.
\end{eqnarray*}
It follows that
\begin{eqnarray*}
 f(\theta(x))=\pm \theta(f(\|x\| \theta(x))) \ \mbox{and} \ \theta(f(\|x\|\theta(x)))=\pm \theta(f(x)).
\end{eqnarray*}

{\bf Step 3}: We will prove that the mapping $f$ satisfies conditions (a) and (b) of Proposition \ref{bot} and this completes the proof.
To check the condition (a), it suffices to show that $\theta(f(tx))=\pm \theta(f(x))$ for every $x\in S_X$ and $t>0$.
We apply the result in the Step 2 to obtain
\begin{eqnarray*}
\theta(f(tx))=\pm f(\theta(tx))=\pm f(\theta(x))=\pm \theta(f(x)).
\end{eqnarray*}
It remains to check the condition (b). For fixed $\gamma_0\in \Gamma$, set $Z:=\{x\in X: e_{\gamma_0}^*(x)=0\}$. Then $Z$ is a hyperplane through the origin such that $e_{\gamma_0}\bot Z$ and $Z\bot e_{\gamma_0}$. We need to check that for every $z\in Z$, there exist two real numbers $\alpha(z)$ and $\beta(z)$ with $|\alpha(z)|=|\beta(z)|=1$ such that $f(z+ e_{\gamma_0})=\alpha(z) f(z) + \beta(z) f(e_{\gamma_0})$. This is equivalent to showing that there exist two real numbers $\alpha$ and $\beta$ with $|\alpha|=|\beta|=1$ such that \begin{eqnarray*}\theta(f(z+ e_{\gamma_0}))=\alpha \theta(f(z)) + \beta f(e_{\gamma_0}).\end{eqnarray*} Note that $\theta(f(z+ e_{\gamma_0}))=\pm f(\theta(z+e_{\gamma_0}))=\pm f(\theta(z)+e_{\gamma_0}).$
From the step 1, we write \begin{eqnarray*} f(\theta(z)+e_{\gamma_0})=\{b_{\gamma} \}+ \beta f(e_{\gamma_0}) \ \mbox{and} \ f(\theta(z))=\{c_{\gamma}\},\end{eqnarray*} where $|\beta|=1$ and $|b_{\gamma}|=|c_{\gamma}|=1$ for every $\gamma\in \Gamma_z$. For every $a,b\in\mathbb{R}$, we put $a\vee b=\max\{a,b\}$.
Then \begin{eqnarray*}&&\{\sup_{\gamma\in \Gamma_z}\{|b_{\gamma}+c_{\gamma}|\}\vee 1, \sup_{\gamma\in \Gamma_z}\{|b_{\gamma}-c_{\gamma}|\}\vee 1\}\\&=&\{\|f(\theta(z)+e_{\gamma_0})+f(\theta(z))\|, \|f(\theta(z)+e_{\gamma_0})-f(\theta(z))\|\}\\&=&\{\|2\theta(z)+e_{\gamma_0}\|, \|e_{\gamma_0}\|\}=\{2, 1\} .\end{eqnarray*}
It follows that $f(\theta(z)+e_{\gamma_0})=\pm f(\theta(z)) +\beta f(e_{\gamma_0})$, as desired.
\end{proof}

Let $\Gamma$ be a nonempty index set. The $\ell^1(\Gamma)$ space is
\begin{eqnarray*}&& \{x=\sum_{\gamma\in \Gamma} x_{\gamma} e_{\gamma}: \|x\|=\sum_{\gamma\in \Gamma} |x_{\gamma}|<\infty, \ x_\gamma \in \mathbb{R}, \ \gamma\in \Gamma\}.\end{eqnarray*}
For every $x\in \ell^1(\Gamma)$, we denote the support set of $x$ by $\Gamma_x$, i.e.,
$$\Gamma_x=\{\gamma\in \Gamma: x_{\gamma} \neq 0 \}.$$  For all $x,y \in \ell^1(\Gamma)$, we have $x\bot y$ if and only if $\Gamma_x \cap \Gamma_y=\emptyset$. It is well known that for all $x,y \in \ell^1(\Gamma)$,
\begin{eqnarray*}x \bot y \Leftrightarrow \|x+y\|+\|x-y\|=2(\|x\|+\|y\|)\Leftrightarrow \|x+y\|=\|x-y\|=\|x\|+\|y\|. \end{eqnarray*}
Please note that for every index set $\Gamma$, if $X=\ell^1(\Gamma)$ then $X^*=\ell^{\infty}(\Gamma)$. Moreover, if $\Gamma$ is a countable set then all $w^*$-exposed points of $B_{\ell^{\infty}(\Gamma)}$ are just \begin{eqnarray*}\{x=\{\theta_{\gamma} e_{\gamma}\}: \theta_{\gamma}=\pm 1, \gamma\in \Gamma\}.\end{eqnarray*} For every uncountable set $\Gamma$, there are no $w^*$-exposed points of $B_{\ell^{\infty}(\Gamma)}$.

The next main result needs a lemma whose proof depends on  the ``continuity'' of phase-isometries.
\begin{lem}\label{Banach}
Let $X$ and $Y$ be real normed spaces, and let $f: X \rightarrow Y$ be a surjective phase-isometry. If $X$ is a Banach space, then so
 is $Y$.
\end{lem}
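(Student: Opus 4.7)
The plan is to take an arbitrary Cauchy sequence $\{y_n\}$ in $Y$ and show it converges. Using surjectivity of $f$, I would choose preimages $x_n \in X$ with $f(x_n)=y_n$. The goal is to produce, from the $x_n$, a Cauchy sequence in $X$, use completeness of $X$ to get a limit $z$, and then recover a limit for $y_n$. The main obstacle is that the phase-isometry identity only gives $\|y_n-y_m\|$ as one member of the unordered pair $\{\|x_n-x_m\|,\|x_n+x_m\|\}$, so there is a persistent sign ambiguity to manage.

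I would first fix a reference index $k$ (to be chosen later) and, for each $n\ge k$, define $s_n\in\{-1,+1\}$ by requiring
\[
\|s_n x_n - x_k\| = \|y_n - y_k\|,
\]
which is possible since the phase-isometry equation applied to $(x_n,x_k)$ says $\|y_n-y_k\|$ equals either $\|x_n-x_k\|$ or $\|x_n+x_k\|$. Setting $z_n := s_n x_n$, the triangle inequality gives
\[
\|z_n - z_m\| \le \|z_n - x_k\| + \|x_k - z_m\| = \|y_n-y_k\| + \|y_k-y_m\|,
\]
which is arbitrarily small once $k$ is large enough and $n,m\ge k$. Hence $\{z_n\}$ is Cauchy in $X$, and by completeness there is $z\in X$ with $z_n\to z$. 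Using Lemma \ref{inj} (so $f$ is odd), we have $y_n = f(x_n) = f(s_n z_n) = s_n f(z_n)$.

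It remains to pass from $z_n\to z$ to convergence of $y_n$. If $z=0$, then $\|f(z_n)\|=\|z_n\|\to 0$ by norm-preservation, and so $y_n = s_n f(z_n)\to 0$. Suppose instead $z\ne 0$. The phase-isometry identity applied to $(z_n,z)$ gives
\[
\{\|f(z_n)-f(z)\|,\,\|f(z_n)+f(z)\|\} = \{\|z_n-z\|,\,\|z_n+z\|\},
\]
where the right-hand set tends to $\{0,2\|z\|\}$. For $n$ large these two values are distinct, so for each such $n$ we may select $\sigma_n\in\{-1,+1\}$ with $\|f(z_n)-\sigma_n f(z)\|\to 0$. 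Writing $c_n := s_n\sigma_n$, we have $y_n - c_n f(z)\to 0$. Since $\{y_n\}$ is Cauchy, so is $\{c_n f(z)\}$; and because $f(z)\ne 0$ and $c_n\in\{-1,+1\}$, the sequence $c_n$ must stabilize, forcing $y_n$ to converge to a point of the form $\pm f(z)\in Y$.

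The main obstacle, as anticipated, is the sign choice: one has to pick the $s_n$ consistently enough that the triangle inequality closes up, and then deal with a second sign $\sigma_n$ arising from the phase-isometry at the limit $z$. Both are resolved by exploiting the Cauchy condition on $\{y_n\}$ (which forces the composite sign $c_n$ to eventually be constant), so no appeal to continuity of $f$ is required—only the discreteness of $\{-1,+1\}$ together with $f(z)\ne 0$.
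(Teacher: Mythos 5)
Your overall strategy (pull a Cauchy sequence $\{y_n\}$ back to preimages $x_n$, correct signs to obtain a Cauchy sequence in $X$, then push the limit forward) is the same as the paper's, but the step ``Hence $\{z_n\}$ is Cauchy in $X$'' has a genuine gap. The signs $s_n$, and therefore the sequence $z_n=s_nx_n$, are defined relative to one fixed reference index $k$. For that fixed $k$ the triangle inequality only yields $\|z_n-z_m\|\le \|y_n-y_k\|+\|y_m-y_k\|\le 2\sup_{j\ge k}\|y_j-y_k\|$ for $n,m\ge k$; this bounds the diameter of the tail by a fixed positive number, it does not tend to $0$ as $n,m\to\infty$. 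To make it small you must enlarge $k$, but then the signs $s_n$ --- hence the sequence $\{z_n\}$ itself --- change, so no single sequence has been shown to be Cauchy. What is missing is precisely a sign-consistency argument: that for all sufficiently large reference indices the induced signs eventually agree. This can actually fail when $\|x_n\|=\|y_n\|\to 0$ (there the sign ambiguity is genuinely unresolvable, although the conclusion $y_n\to 0$ is then trivial), so some case distinction is unavoidable.

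The gap is repairable, but only by adding exactly the dichotomy and lower bound your write-up skips: since $\|x_n\|=\|y_n\|$ converges, either $a:=\lim_n\|y_n\|=0$, in which case $y_n\to 0$ and you are done, or $a>0$, in which case for large $n$ one has $\|x_n\|>3a/4$, and if the two signed copies $\pm x_n$ were both within $a/4$ of a common reference vector they would be within $a/2$ of each other while $\|x_n-(-x_n)\|=2\|x_n\|>3a/2$, a contradiction; this pins the sign down independently of the reference index and makes your triangle-inequality estimate legitimate. Note that your later dichotomy ``$z=0$ or $z\ne 0$'' cannot play this role, since $z$ only exists after Cauchyness of $\{z_n\}$ is established, and your closing remark that the Cauchy condition on $\{y_n\}$ forces the composite sign $c_n$ to stabilize addresses only the second sign $\sigma_n$, not the consistency of the $s_n$. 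The paper's proof performs this missing step via the estimate $\|f(x_n)+f(x_m)\|>2a-2\epsilon$ together with the partition of indices into the two sets $A$ and $B$; the remainder of your argument (oddness via Lemma \ref{inj}, and the passage from $z_n\to z$ to convergence of $y_n$ to $\pm f(z)$) is correct.
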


\begin{prf}
Suppose that $\{f(x_n): x_n\in X, n\in \mathbb{N}\}$  is a Cauchy sequence in $Y$. Clearly, $\{\|x_n\|\}$ is also a Cauchy sequence
in $\mathbb{R}$. If $\{\|x_n\|\}$ converges to zero, then $\lim_{n\rightarrow\infty}f(x_n)=0$ in $Y$ by norm preserving property.
Now we assume that $a=\lim_{n\rightarrow\infty}\|x_n\|>0$. For every $0<\epsilon<a/2$, there is a cutoff $n_{\epsilon}\in \mathbb{N}$ such that whenever $m,n>n_{\epsilon}$, we have $\|x_m\|, \|x_n\|>a-\epsilon/2$ and $\|f(x_n)-f(x_m)\|< \epsilon$.
From the equation (\ref{phase}),  \begin{eqnarray*}\|f(x_n)+f(x_m)\|+\|f(x_n)-f(x_m)\|=\|x_n+x_m\|+\|x_n-x_m\|\geq 2\max\{\|x_n\|, \|x_m\|\}\end{eqnarray*} for all $m,n\in \mathbb{N}$. It follows that $\|f(x_n)+f(x_m)\|>2a-2\epsilon>a$ for all $m,n>n_{\epsilon}$.
Fix a positive integer $n_0>n_{\epsilon}$ and set $A:=\{m \in \mathbb{N}: \|x_m-x_{n_0}\|=\|f(x_m)-f(x_{n_0})\|<\epsilon, m>n_0\}$ and $B:=\{m \in \mathbb{N}: \|x_m+x_{n_0}\|=\|f(x_m)-f(x_{n_0})\|<\epsilon, m>n_0\}$. Then for all $m, n\in A$ or $B$, we have \begin{eqnarray*}\|x_m-x_n\|\leq\|f(x_{m})-f(x_{n_0})\|+\|f(x_{n})-f(x_{n_0})\|< 2\epsilon<a.\end{eqnarray*} The inequality $\|f(x_n)+f(x_m)\|>a$  yields $\|x_m-x_n\|=\|f(x_m)-f(x_n)\|$ for all $m, n\in A$ or $B$. Since $A$ or $B$ is an infinite set, we may assume by passing to a subsequence that $\{x_n\}$ is a Cauchy sequence, and $\{x_n\}$ converges to $x$ in $X$. From the equation (\ref{phase}), we have \begin{eqnarray*}\{\|f(x_n)-f(x)\|, \|f(x_n)+f(x)\|\}=\{\|x_n-x\|,\|x_n+x\|\}\end{eqnarray*} for every $n\in \mathbb{N}$. So we obtain that the Cauchy sequence $\{f(x_n)\}$ converges to $f(x)$ or $-f(x)$. This completes the proof.
\end{prf}

\begin{rem}
Let $X$ and $Y$ be real normed spaces, and let $f: X \rightarrow Y$ be a phase-isometry. Using the same proof as the above Lemma \ref{Banach}, for every sequence  $\{x_n\}$ that converges to $x$ in $X$, there is a subsequence $\{f(x_{n_i})\}$ converges to $f(x)$ or $-f(x)$ in $Y$.
\end{rem}

\begin{thm}
Let $X=\ell^1(\Gamma)$, and let $Y$ be a normed space. Suppose that $f:X\rightarrow Y$ is a surjective phase-isometry. Then $f$ is phase-equivalent to a linear isometry.
\end{thm}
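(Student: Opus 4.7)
The plan is to verify the two hypotheses of Proposition \ref{bot} with the choice $x_0 := e_{\gamma_0}$ for some fixed $\gamma_0 \in \Gamma$ and the hyperplane $Z := \{z \in X : z_{\gamma_0} = 0\}$, then invoke it to conclude that $f$ is phase-equivalent to a linear isometry. The Birkhoff orthogonalities $x_0 \perp Z$ and $Z \perp x_0$ follow immediately from the $\ell^1$ identity $\|z + t e_{\gamma_0}\| = \|z\| + |t|$ valid for all $z \in Z$ and $t \in \mathbb{R}$. A preliminary observation I will use throughout is that $f$ preserves two-sided $\ell^1$-orthogonality: if $\xi, \eta \in X$ have disjoint supports, then $\|\xi \pm \eta\| = \|\xi\| + \|\eta\|$, and the phase-isometry equation forces $\|f(\xi) \pm f(\eta)\| = \|f(\xi)\| + \|f(\eta)\|$.

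To verify condition (a), namely $f(tx) = \pm t f(x)$ for every $x \in S_X$ and $t > 0$, I plan to apply Lemma \ref{diff} to the restriction of $f$ to the separable subspace $\ell^1(\Gamma_x)$, where $\Gamma_x$ denotes the (countable) support of $x$. Since $\Gamma_x$ is countable, every $\pm 1$-sign sequence on $\Gamma_x$ is a $w^*$-exposed point of $B_{\ell^\infty(\Gamma_x)}$, so Lemma \ref{diff} supplies a family of functionals $\varphi_\theta \in S_{Y^*}$ satisfying $\sum_\gamma \theta_\gamma \xi_\gamma = \pm \varphi_\theta(f(\xi))$ for every $\xi \in \ell^1(\Gamma_x)$. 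Testing this identity at $\xi = tx$ and $\xi = x$ across sufficiently many $\theta$, together with the norm identity $\|f(tx)\| = t = t\|f(x)\|$, will force $f(tx) = \pm t f(x)$.

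For condition (b), namely that for every $z \in Z$ there exist $\alpha(z), \beta(z)$ with $|\alpha(z)| = |\beta(z)| = 1$ and $f(z + e_{\gamma_0}) = \alpha(z) f(z) + \beta(z) f(e_{\gamma_0})$, set $u := f(z)$, $v := f(e_{\gamma_0})$, and $w := f(z + e_{\gamma_0})$. The phase-isometry gives $\|u \pm v\| = \|u\| + \|v\|$ together with the constraint pairs $\{\|w \pm u\|\} = \{2\|z\| + 1,\, 1\}$ and $\{\|w \pm v\|\} = \{\|z\| + 2,\, \|z\|\}$. A four-fold case analysis on the signs picked out, combined with the triangle-equality principle (whenever $\|a + b\| = \|a\| + \|b\|$ there is a common norming functional for $a$ and $b$), and the functionals from Lemma \ref{diff} applied to $\ell^1(\Gamma_z \cup \{\gamma_0\})$, will force $w$ to lie in the span of $u$ and $v$ with unit-modulus coefficients.

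The main obstacle will be condition (b): in a general Banach space $Y$ the above norm constraints on $w$ admit \emph{a priori} more solutions than $\pm u \pm v$, and the extra rigidity must be harvested from the sign-sequence functionals of Lemma \ref{diff}, which act as a surrogate coordinate system on $Y$. Once both conditions are in place, Proposition \ref{bot} yields at once that $f$ is phase-equivalent to a linear isometry.
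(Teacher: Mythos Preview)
Your high-level plan --- verify the two hypotheses of Proposition \ref{bot} with $x_0=e_{\gamma_0}$ and $Z=\ker e_{\gamma_0}^*$ --- matches the paper's Step 5, and your use of Lemma \ref{diff} on the separable subspace $\ell^1(\Gamma_x)$ via sign sequences is exactly the idea behind the paper's Step 4. What is missing is the engine that makes both verifications go through: the coordinate expansion
\[
f(x)=\sum_{\gamma\in\Gamma_x}\pm\,x_\gamma\,f(e_\gamma)\qquad(x\in X),
\]
which the paper obtains in Steps 2--4 by an induction over finitely supported images (using surjectivity to find preimages of $\sum b_i f(e_{\gamma_i})$), an extension to infinite sums (which requires Lemma \ref{Banach} to know $Y$ is complete), and only then a single application of Lemma \ref{diff}. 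Without this expansion your arguments for (a) and (b) do not close.

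Concretely, for (a): from Lemma \ref{diff} you only get $|\varphi_\theta(f(tx))|=t\,|\varphi_\theta(f(x))|$ for each sign sequence $\theta$. In an arbitrary $Y$ this family of scalar relations, together with $\|f(tx)\|=t\|f(x)\|$ and $\{\|f(tx)\pm f(x)\|\}=\{t+1,|t-1|\}$, does not force $f(tx)\in\{\pm t f(x)\}$; the functionals $\varphi_\theta$ need not separate points of $Y$, nor is there any reason $f(tx)$ must lie in the closed span of $\{f(e_\gamma)\}$. The paper's route is to first prove $f(tx)$ and $f(x)$ both have the form $\sum_\gamma c_\gamma f(e_\gamma)$ with $|c_\gamma|=|x_\gamma|$, and then the $\ell^1$-additivity of that span together with the phase equation pins down the signs.

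For (b) the same issue arises: your constraints $\{\|w\pm u\|\}=\{2\|z\|+1,\,1\}$, $\{\|w\pm v\|\}=\{\|z\|+2,\,\|z\|\}$ and a common norming functional for $u,v$ do not place $w$ in $\operatorname{span}\{u,v\}$ in a general Banach space, and Lemma \ref{diff} again gives only absolute values of functionals. The paper instead writes $f(z+e_{\gamma_0})=\sum_\gamma b_\gamma f(e_\gamma)+\beta f(e_{\gamma_0})$ and $f(z)=\sum_\gamma c_\gamma f(e_\gamma)$ from the expansion, and then a single norm computation shows one of $\sum|b_\gamma\pm c_\gamma|$ vanishes. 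So the missing ingredient in your outline is precisely the inductive/surjectivity argument giving the expansion, together with the completeness of $Y$; once you insert that, your Steps (a) and (b) become the paper's Step 5 verbatim.
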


\begin{prf}
The proof will be divided into five steps.

{\bf Step 1}: Since $f$ satisfies the equation (\ref{phase}), it follows that for all $x,y\in X$,
\begin{eqnarray*}x\bot y \Leftrightarrow\|f(x)+f(y)\|=\|f(x)-f(y)\|=\|f(x)\|+\|f(y)\|.\end{eqnarray*}
Alternatively, $x\bot y$ if and only if $\|af(x)+bf(y)\|=|a|\|f(x)\|+|b|\|f(y)\|$ for all $a,b\in \mathbb{R}$. Fix $\gamma_0\in \Gamma$ and $t\in \mathbb{R}$, we can find $x\in X$ with $\|x\|=|t|$ such that $f(x)=tf(e_{\gamma_0})$.  Then for every $\gamma\in \Gamma$ with $\gamma\neq \gamma_0$,
\begin{eqnarray*}\|x+e_{\gamma}\|+\|x-e_{\gamma}\|=\|tf(e_{\gamma_0})+f(e_{\gamma})\|+\|tf(e_{\gamma_0})-f(e_{\gamma})\|=2(1+\|x\|). \end{eqnarray*}
This yields $x\bot e_{\gamma}$ for every $\gamma\in \Gamma$ and $\gamma\neq \gamma_0$, and hence $x=\pm te_{\gamma_0}$. We
conclude that \begin{eqnarray*} f(te_\gamma)=\pm tf(e_\gamma), \quad \mbox{for all} \ \gamma\in \Gamma, \ t\in \mathbb{R}.\end{eqnarray*}

{\bf Step 2}: We shall show by induction that for every $n\in \mathbb{N}$, $\{\gamma_i\in \Gamma: 1\leq i\leq n\}$ and $\{b_i\}_{i=1}^{n}\subset \mathbb{R}$, if $f(x)=\sum_{i=1}^{n} b_i f(e_{\gamma_i})\in Y$ for some $x\in X$ then \begin{eqnarray*}x=\sum_{i=1}^{n} a_i e_{\gamma_i}, \quad \mbox{where} \ |a_i|=|b_i|\ \mbox{for all} \ 1\leq i\leq n.\end{eqnarray*} The statement is true for $n=1$ from Step 1. Suppose that the conclusion is true for $n=k-1$. Now take $x=\sum_{\gamma\in \Gamma} x_{\gamma} e_{\gamma} \in X$ such that $f(x)=\sum_{i=1}^{k} b_i f(e_{\gamma_i})$. For every $1\leq m\leq k$, we can choose $y_m\in X$ such that $f(y_m)=f(x)-b_mf(e_{\gamma_m})$. By the assumption of the truth of $n=k-1$, we have $y_m\bot e_{\gamma_m}$ and $\|y_m\|=\sum_{i=1}^k|b_i|-|b_m|$. It follows that $\|f(x)\|=\sum_{i=1}^k|b_i|$, and hence \begin{eqnarray*} &&\|f(x)+b_mf(e_{\gamma_m})\|+\|f(x)-b_mf(e_{\gamma_m})\|\\ &=& \|f(y_m)+2b_mf(e_{\gamma_m})\|+\|f(y_m)\|\\&=&\|y_m\|+|2b_m|+\|y_m\|\\ &=&2\|f(x)\|=2\|x\|.\end{eqnarray*}
On the other hand,
\begin{eqnarray*}&&\|f(x)+b_mf(e_{\gamma_m})\|+\|f(x)-b_mf(e_{\gamma_m})\|\\&=&\|x+b_me_{\gamma_m}\|+\|x-b_me_{\gamma_m}\|\\ &=&2(\|x\|-|x_{\gamma_m}|)+|x_{\gamma_m}+b_m|+|x_{\gamma_m}-b_m|\\ &=& 2\|x\|-2\|x_{\gamma_m}\|+2\max\{\|x_{\gamma_m}\|,|b_m|\|\}.
\end{eqnarray*}
It follows that $|x_{\gamma_m}|\geq |b_m|$ for every $1\leq m\leq k$. Observe that \begin{eqnarray*}\sum_{\gamma\in \Gamma} |x_{\gamma}|=\|x\|=\|f(x)\| =\sum_{i=1}^k|b_i|.\end{eqnarray*} Therefore, $|x_{\gamma_m}|=|b_m|$ for all $1\leq m\leq k$ and $x_{\gamma}=0$ for every $\gamma\in \Gamma\setminus\{\gamma_n: 1\leq n\leq k\}$. Consequently, by the Principle of Finite Induction, the statement is true for all $n\in \mathbb{N}$.

{\bf Step 3}: We shall prove that for all sequences $\{\gamma_n\}_{n=1}^{\infty}\subset\Gamma$ and $\{b_n\}_{n=1}^{\infty}\subset \mathbb{R}$ with $\sum_{n=1}^{\infty}|b_n|<\infty$, if $f(x)=\sum_{n=1}^{\infty} b_n f(e_{\gamma_n})\in Y$ for some $x\in X$ then \begin{eqnarray*}x=\sum_{n=1}^{\infty} a_n e_{\gamma_n}, \quad \mbox{where} \ |a_n|=|b_n|\ \mbox{for all} \ n\in \mathbb{N}.\end{eqnarray*}
Since $Y$ is a Banach space by Lemma \ref{Banach}, the formula $\sum_{n=1}^{\infty} b_n f(e_{\gamma_n})$ is well defined. Step 2 shows that for every sequence $\{c_n\}_{n=1}^{\infty}\subset \mathbb{R}$ with $\sum_{n=1}^{\infty}|c_n|<\infty$,
\begin{eqnarray*}\|\sum_{n=1}^{\infty} c_n f(e_{\gamma_n})\|=\lim_{k\rightarrow \infty}\|\sum_{n=1}^{k} c_n f(e_{\gamma_n})\|=\lim_{k\rightarrow \infty}\sum_{n=1}^{k}|c_n|=\sum_{n=1}^{\infty}|c_n|.\end{eqnarray*}
We write $x=\sum_{\gamma\in \Gamma} x_{\gamma} e_{\gamma} \in X$. A short computation shows that  \begin{eqnarray*} 2\|f(x)\|&=&\|f(x)+b_nf(e_{\gamma_n})\|+\|f(x)-b_nf(e_{\gamma_n})\|\\&=&\|x+b_ne_{\gamma_n}\|+\|x-b_ne_{\gamma_n}\|\\ &=&2\|x\|-2\|x_{\gamma_n}\|+2\max\{\|x_{\gamma_n}\|,|b_n|\|\} \end{eqnarray*} for every $n\in \mathbb{N}$.
This means that $|x_{\gamma_n}|\geq |b_n|$ for every $n\in \mathbb{N}$. Moreover, the equation $\|x\|=\sum_{n=1}^{\infty}|b_n|$  proves that
$x=\sum_{n=1}^{\infty} x_{\gamma_n} e_{\gamma_n}$, where $|x_{\gamma_n}|=|b_n|$ for every $n\in \mathbb{N}$.

{\bf Step 4}: Now we shall prove that if $x=\sum_{\gamma\in \Gamma} x_{\gamma} e_{\gamma} \in X$ then $f(x)=\sum_{\gamma\in \Gamma} f(x)_{\gamma} f(e_{\gamma})$, where $|f(x)_{\gamma}|=|x_{\gamma}|$ for every $\gamma\in \Gamma$. For every $x\in X$, the support set $\Gamma_x$ is countable. Since the proof also applies to the case of finite, we can put $\Gamma_x=\{\gamma_n: n\in \mathbb{N}\}$ and $x=\sum_{n=1}^{\infty}x_{\gamma_n} e_{\gamma_n}$. Let $E:=\ell^1(\Gamma_x)$ be a subspace of $X$ and $x^*=\sum_{n=1}^{\infty}\mbox{sign}(x_{\gamma_n}) e_{\gamma_n}$ be a linear functional in $E^*=\ell^{\infty}(\Gamma_x)$. Then $x^*$ is a $w^*$-exposed points of $B_{E^*}$ and \begin{eqnarray*}x^*(x)=\|x\|=\sum_{n=1}^{\infty}|x_{\gamma_n}|.\end{eqnarray*}
Note that the restriction of $f$ to $E$ is also a phase-isometry. By Lemma \ref{diff}, there exists a linear functional $\varphi \in S_{Y^*}$ such that $x^*(v)=\pm \varphi(f(v))$ for all $v\in E$. In particular, $\varphi(f(e_{\gamma_n}))=\pm x^*(e_{\gamma_n})=\pm 1$ for every $n\in \mathbb{N}$. Since $Y$ is a Banach space, we get the vector $w=\sum_{n=1}^{\infty} \varphi(f(e_{\gamma_n})) |x_{\gamma_n}| f(e_{\gamma_n})\in Y$ is well defined. By Step 3, we can find $z=\sum_{n=1}^{\infty}z_{\gamma_n} e_{\gamma_n}\in E$ such that $f(z)=w$, where $|z_{\gamma_n}|=|\varphi(f(e_{\gamma_n})) x_{\gamma_n}|=|x_{\gamma_n}|$ for every $n\in \mathbb{N}$. The equation $|x^*(z)|=|\varphi(f(z))|=\sum_{n=1}^{\infty}|x_{\gamma_n}|$ implies that $z=\pm x$, and so
\begin{eqnarray*} f(x)=\pm \sum_{n=1}^{\infty} \varphi(f(e_{\gamma_n})) |x_{\gamma_n}| f(e_{\gamma_n}).\end{eqnarray*}

{\bf Step 5}: We check that the mapping $f$ satisfies conditions (a) and (b) of Proposition \ref{bot}. To check the condition (a).
Let $x=\sum_{\gamma\in \Gamma_x} x_{\gamma} e_{\gamma}\in S_X$ and $t>0$. By Step 4 we can write \begin{eqnarray*}f(x)=\sum_{\gamma\in \Gamma_x} b_{\gamma} f(e_\gamma) \ \mbox{and} \ f(tx)=t\sum_{\gamma\in \Gamma_x} c_{\gamma} f(e_\gamma),\end{eqnarray*} where $|b_{\gamma}|=|c_{\gamma}|=|x_{\gamma}|$ for every $\gamma\in \Gamma_x$. Therefore,
\begin{eqnarray*}1+t=\|x+tx\| &\in& \{\|f(x)+f(tx)\|, \|f(x)-f(tx)\|\}\\ &=&\{\sum_{\gamma\in \Gamma_x}|b_{\gamma}+tc_{\gamma}|, \sum_{\gamma\in \Gamma_x}|b_{\gamma}-tc_{\gamma}|\}.\end{eqnarray*}
From the inequality \begin{eqnarray*}\sum_{\gamma\in \Gamma_x}|b_{\gamma}\pm tc_{\gamma}|\leq \sum_{\gamma\in \Gamma_x}|b_{\gamma}|+t|c_{\gamma}|=1+t,\end{eqnarray*} we conclude that $f(tx)=\pm tf(x)$. To check the condition (b). For fixed $\gamma_0\in \Gamma$, the set $Z:=\{z\in \ell^1(\Gamma): e_{\gamma_0}\bot z\}$ is a hyperplane through the origin such that $e_{\gamma_0}\bot Z$ and $Z\bot e_{\gamma_0}$. For every $z= \sum_{\gamma\in \Gamma_z} z_{\gamma} e_{\gamma}$, we write \begin{eqnarray*} f(z+e_{\gamma_0})=\sum_{\gamma\in \Gamma_x} b_{\gamma} f(e_\gamma)+\beta f(e_{\gamma_0})\ \mbox{and} \ f(z)=\sum_{\gamma\in \Gamma_z} c_{\gamma} f(e_\gamma) ,\end{eqnarray*} where $|\beta|=1$ and $|b_{\gamma}|=|c_{\gamma}|=|z_{\gamma}|$ for every $\gamma\in \Gamma_z$. Consequently, \begin{eqnarray*}1=\|z+e_{\gamma_0}\|-\|z\|&\in& \{\|f(z+e_{\gamma_0})+f(z)\|, \|f(z+e_{\gamma_0})-f(z)\|\}\\ &=&\{\sum_{\gamma\in \Gamma_x}|b_{\gamma}+c_{\gamma}|+1, \sum_{\gamma\in \Gamma_x}|b_{\gamma}-c_{\gamma}|+1\}.\end{eqnarray*}
This means $f(z+e_{\gamma_0})=\pm f(z)+ \beta f(e_{\gamma_0})$ for every $z\in Z$, which completes the proof.
\end{prf}

\subsection*{Acknowledgements}
The authors wish to express their appreciation to Guanggui Ding for many very helpful comments regarding isometric theory in Banach spaces.

\end{document}